\def\para#1{\vskip .4\baselineskip\noindent{\bf #1}}
\newtheorem{thm}{Theorem}[section]
\newtheorem {asp}{Assumption}[section]
\newtheorem{lm}{Lemma}[section]
\newtheorem{prop}[thm]{Proposition}
\theoremstyle{definition}
\theoremstyle{remark}
\numberwithin{equation}{section}
\newcommand{\eps}{\varepsilon}
\newcommand{\M}{\mathcal{M}}
\newcommand{\F}{\mathcal{F}}
\newcommand{\N}{\mathbb{N}}
\newcommand{\PP}{\mathbb{P}}
\newcommand{\R}{\mathbb{R}}
\numberwithin{equation}{section}
\newcommand{\1}{\boldsymbol{1}}
\newcommand{\bed}{\begin{displaymath}}
\newcommand{\eed}{\end{displaymath}}
\newcommand{\bea}{\bed\begin{array}{rl}}
\newcommand{\eea}{\end{array}\eed}
\newcommand{\ad}{&\!\!\!\disp}
\newcommand{\barray}{\begin{array}{ll}}
\newcommand{\earray}{\end{array}}
\def\disp{\displaystyle}
\newcommand{\rr}{{\Bbb R}}
\def\bar{\overline}
\def\hat{\widehat}
\def\a.s{\text{\;a.s.\;}}
\begin{document}
\title{Conditions for Permanence and Ergodicity of
Certain Stochastic Predator-Prey Models}
\author{N.H. Du,\thanks{Department of Mathematics, Mechanics and
Informatics, Hanoi National University,
 334 Nguyen Trai, Thanh Xuan, Hanoi Vietnam, dunh@vnu.edu.vn. This research was
supported in part by
NAFOSTED
n$_0$ 101.02 - 2011.21.}\and
N.H. Dang,\thanks{Department of Mathematics, Wayne State University, Detroit, MI
48202, USA,
dangnh.maths@gmail.com. This
research was supported in part by the National Science Foundation under grant DMS-1207667.
} \and
 G. Yin\thanks{Corresponding author: Department of Mathematics, Wayne State University, Detroit, MI
48202, USA, gyin@math.wayne.edu.
This
research was supported in part by the National Science Foundation under grant DMS-1207667.
}}
\maketitle

\begin{abstract} This work derives sufficient conditions
for the permanence and ergodicity of a stochastic predator-prey model with Beddington-DeAngelis
functional response.
The conditions obtained in fact are very close to the necessary conditions.
Both non-degenerate and degenerate diffusions are considered.
One of the distinctive features of our results
is that our results enables
characterization of the support of a unique invariant probability measure. It proves the convergence in
 total variation norm of the transition probability to the invariant measure.
 Comparisons to existing literature and related matters to other stochastic predator-prey
 models are also given.

\bigskip
\noindent {\bf Keywords.} Ergodicity; extinction;  permanence; predator-prey, Beddington-DeAngelis functional response;
stationary distribution.

\bigskip
\noindent{\bf Subject Classification.} 34C12, 60H10, 92D25.

\end{abstract}

\newpage

\setlength{\baselineskip}{0.28in}

\section{Introduction}\label{sec:int}

 This paper focuses on stochastic predator-prey models with
 Beddington-DeAngelsis functional response.
 In ecology,
 a functional response is the intake rate of a consumer as a function of food density. It is associated
with the numerical response that is the reproduction rate of a consumer as a function of food density.
 Holling \cite{Holling} initiated the study of functional response, where he introduced several types of such responses.
The so-called Holling type II functional response is characterized by a decelerating intake rate following from the assumption
that the consumer is limited by its capacity to process food. Similar to Holling-type functional
response with an extra term describing mutual interference
by predators,
Beddington \cite{Bed} and DeAngelis  et. al. \cite{DeA} introduced
the nowadays well-known Beddington-DeAngelis functional response; see also \cite{Zhang} and the references therein.
Such a model represents most of the
qualitative features of the ratio-dependent models but avoids
the ``low densities problem.''

As the building blocks of the bio- and eco-systems,
the basic premise of the predator-prey models
is that
species compete, evolve, and disperse  for the purpose of seeking resources to sustain their struggle and existence.
Denote the two population
  sizes at time $t$
 by $x(t)$ and $y(t)$, respectively.
 Then a general deterministic model called Kolmogorov's predator-prey model takes the form
 $$\left\{ \barray
 \ad \dot x(t)= xf(x,y),\\
 \ad \dot y(t)= y g(x,y).\earray \right. $$
When $f(x,y)=b- py$ and $g(x,y)=cx -d$, one gets the so-called Lotka-Volterra model.

In addition to the study of deterministic models,
 stochastic predator-prey models have received increasing and resurgent attention.
 Stochastic models can be considered
as the above systems subject to Brownian motion perturbations.
Rudnicki \cite{RR} provided a detailed analysis for stability in distribution of a stochastic
 Lotka-Volterra model. Meanwhile, Mao et. al. \cite{MSR} and Du and Sam \cite{DS} studied general
 stochastic Lotka-Volterra models using Lyapunov-type functions and exponential martingale inequalities.
 Recently, Lotka-Volterra models in random environment have also gained much attention \cite{ZhuY-lot}.
 In addition, there is a resurgent interests in treating evolutionary games \cite{HS98}, in which Lotka-Volterra type equations
 are one of the central models.
Concerning different functional responses,
references \cite{LSJJ} and \cite{LW} dealt with the stochastic predator-prey
model with Holling functional response of the form
\begin{equation}\label{e1.0}
\begin{cases}
dx(t)=x(t)\big(a_1-b_1x(t)-\dfrac{c_1y(t)}{1+x(t)}\big)dt+\alpha x(t)dB_1(t), \\
dy(t)=y(t)\big(-a_2-b_2y(t)+\dfrac{c_2x(t)}{1+x(t)}\big)dt+\beta y(t)dB_2(t),
\end{cases}
\end{equation}
where $a_i$, $b_i$, $c_i$, $\alpha$, and $\beta$ are appropriate constants, and $B_i(\cdot)$ are
standard Brownian motions.
Ji et. al. \cite{JJS} studied the predator-prey model with modified Leslie-Gower and Holling type II schemes
with stochastic perturbation; see also  \cite{JJL} in which  stochastic ratio-dependent predator-prey models were considered.
Moreover, several stochastic models with the well-known Beddington-DeAngelsis functional response were also studied in  \cite{JJ, LW2,TDK}.
In ecology models, an important concept is
stochastic permanence, which indicates that the species will survive
forever. Much effort has been devoted to
finding conditions needed for stochastic permanence.
In some of the aforementioned papers,
 using suitable Lyapunov-type functions,
 some conditions for extinction or permanence
 were also provided and ergodicity was  investigated; see \cite{JJ, LW}.
However, as shown later in
Section \ref{sec:dis}
of this paper, their conditions are restrictive and not close to a necessary condition.
In other words, there is a considerably large set of parameters  satisfying neither their conditions for extinction nor for permanence.
Moreover, their results are not applicable to degenerate cases.
Thus, although interesting, their work left a sizable gap.
One of the main goals of this paper is to
close this gap. We aim to providing
a sufficient and almost necessary condition for permanence (as well as ergodicity) for the following
model with Beddington-DeAnglesis functional response,
\begin{equation}\label{me}
\begin{cases}
dx(t)=x(t)\big(a_1-b_1x(t)-\dfrac{c_1y(t)}{m_1+m_2x(t)+m_3y(t)}\big)dt+\alpha x(t)dB_1(t), \\
dy(t)=y(t)\big(-a_2-b_2y(t)+\dfrac{c_2x(t)}{m_1+m_2x(t)+m_3y(t)}\big)dt+\beta y(t)dB_2(t),
\end{cases}
\end{equation}
where $a_i, b_i, c_i, m_i$ are positive constants for $i=1,2$, $m_3\geq0$, $\alpha\ne0,\beta\ne0$,
and $B_1(\cdot), B_2(\cdot)$ are two mutually independent Brownian motions.
When $m_3=0,$ the functional response is said to be of Holling type II.
Moreover, in this paper, we also consider the degenerate case $B_1(\cdot)=B_2(\cdot)$.

The rest of the paper is arranged as follows.
Section \ref{sec:thr} derives a threshold that is used to determine extinction and permanence.
To establish the desired result, after considering the dynamics on the boundary, we obtain
 a threshold $\lambda$ that enables us to determine the asymptotic behavior of the solution.
In particular, it is shown that if $\lambda<0$, the predator will eventually die out.
In case $\lambda>0$, the solution  converges to a stationary distribution in total variation norm. Moreover, ergodicity is established.
Section \ref{sec:thr} concentrates on non-degenerate case,
whereas Section \ref{sec:deg} treats the degenerate case $B_1(\cdot)=B_2(\cdot)$.
In the degenerate case, under usual conditions imposed on the Lie algebra generated by the drift and the diffusion coefficients,
we investigate the controllability of the associated control systems and used certain results in
\cite{WK} to prove analogous results to the nondegenerate case, namely,
 the existence and uniqueness of an invariant probability measure as well as the convergence in total variation of the transition probability.
 Moreover, the support of the invariant measure is described.
Finally, Section \ref{sec:dis} provides further discussion and insight. Among other things, it
points out that the techniques used in this paper
 can be  applied to other stochastic predator-prey models.

\section{Threshold Between Extinction and Permanence}\label{sec:thr}
Let $(\Omega,\F,\{\F_t\}_{t\geq0},\PP)$ be a complete filtered probability space with the filtration $\{\F_t\}_{t\geq 0}$ satisfying the usual condition,  i.e., it is increasing and right continuous while $\F_0$ contains all $\PP-$null sets. Let $B_1(t)$ and $B_2(t)$ be two $\F_t$-adapted, mutually independent Brownian motions.
It is well known that for any initial value $(x(0),y(0))\in\R^{2,\circ}_+$ (the interior of $\rr^2_+$),
there exists a unique global solution to \eqref{me} that remains in $\R^{2,\circ}_+$ almost surely (see \cite{JJ}). To proceed,
we first consider  the equation on the boundary,
\begin{equation}\label{e2.1}
d\varphi(t)=\varphi(t)(a_1-b_1\varphi(t))dt+\alpha \varphi(t)dB_1(t).
\end{equation}
By comparison theorem, it is easy to check that $x(t)\leq \varphi(t)$ $\forall t\geq0$ a.s. provided that $x(0)= \varphi(0)>0$ and $y(0)>0$.
If $a_1\leq\alpha^2/2$, we can easily verify item (2) of \cite[Theorem 3.1, p. 447]{IW} to show that
$\lim\limits_{t\to\infty}\varphi(t)=0$ a.s.  Hence, $\lim\limits_{t\to\infty}x(t)=0$ a.s., which
implies $\lim\limits_{t\to\infty}y(t)=0$ almost surely (a.s.).
For this reason, in the sequel, we
suppose that $a_1>\alpha^2/2$ throughout the rest of the paper.

Defining  $\theta(t)=\ln \varphi(t)$, equation \eqref{e2.1} becomes
\begin{equation}\label{e2.2}
d\theta(t)=\Big(a_1-\dfrac{\alpha^2}2-b_1\exp\big(\theta(t)\big)\Big)dt+\alpha dB_1(t).
\end{equation}
By solving the Fokker-Planck equation, it is shown that the process $\theta(t)$ has a unique stationary distribution with density given by
$f^*(x)=C\exp\Big(qx-a\exp(x)\Big),$
 where $q=\dfrac{2a_1}{\alpha^2}-1>0$, $a=\dfrac{2b_1}{\alpha^2}>0$, and $C$ is the normalizing constant.
Since $\theta(t)=\ln \varphi(t)$, it can be   easily  seen that $\varphi(t)$ has a unique stationary distribution $\mu_-(\cdot)$ with density $\phi^*(x)=Cx^{q-1}e^{-ax}, x>0$. It turns out that
 that $C=a^q/\Gamma(q)$ with $\Gamma(\cdot)$ being the Gamma function and that $\mu_-(\cdot)$ is the Gamma distribution with parameters $q$ and $a$.

\noindent By the strong law of large number type result \cite[Theorem 3.16, p. 46]{AS}, we deduce that
\begin{equation}\label{e2.3}
\lim\limits_{t\to\infty}\dfrac1t \int_0^t\varphi^p(s)ds=\dfrac{a^q}{\Gamma(q)}\int_{0}^\infty x^{p+q-1}e^{-ax}dx=\dfrac{\Gamma(p+q)}{a^p\Gamma(q)}:=K_p<\infty\text{ a.s }\,\forall p>0.
\end{equation}
In particular,
with $p=1$, $K_1=\dfrac{q}a=	\dfrac{a_1-\alpha^2/2}{b_1}$.
This property implies that
$$\lim\limits_{t\to\infty}\dfrac1t\ln \varphi(t)=\lim\limits_{t\to\infty}\bigg(\dfrac1t\int_0^t\Big(a_1-\dfrac{\alpha^2}2-b_1\varphi(s)\Big)ds\bigg)+\alpha\lim\limits_{t\to\infty}\dfrac{B_1(t)}t=0.$$
Consequently,
\begin{equation}\label{e2.4}
\limsup\limits_{t\to\infty}\dfrac1t\ln x(t)\leq0,
\end{equation}
and
\begin{equation}\label{e2.5}
\limsup\limits_{t\to\infty}\dfrac1t \int_0^tx^p(s)ds\leq K_p.
\end{equation}
Let $\psi(t)$ be the solution to
$$
d\psi(t)=\psi(t)\big(-a_1+\dfrac{c_2}{m_2}-b_2\psi(t)\big)dt+\beta \psi(t)dB_2(t).
$$
Then $y(t)\leq \psi(t)\,\forall t\geq0$ a.s. provided $y(0)=\psi(0)>0$.
Hence, with probability 1
\begin{equation}\label{e2.7}\footnote{In J. Bao and J. Shao, Permanence and Extinction of Regime-Switching Predator-Prey Models, {\it SIAM J. Math. Anal.}, 48(1), 725–739,
they said that an additional condition needs to be added to obtain \eqref{e2.7}, namely
$-a_2-\dfrac{\beta^2}2+\dfrac{c_2}{m_2}>0$ because it is the condition for $\psi(t)$ to be an ergodic process.

However, \eqref{e2.7} can be obtained without this condition.
Indeed, suppose that $-a_2-\dfrac{\beta^2}2+\dfrac{c_2}{m_2}\leq0$.
Let $\tilde\psi(t)$ be the solution to
$$
d\tilde\psi(t)=\tilde\psi(t)\big(-a_1+\dfrac{c_2}{m_2}\big)dt+\beta\tilde \psi(t)dB_2(t).
$$
  We have that
$$\lim\limits_{t\to\infty}\dfrac{\ln\tilde\psi(t)}t=\lim\limits_{t\to\infty}\left(\dfrac{\ln\tilde\psi(0)}t-a_2-\dfrac{\beta^2}2+\dfrac{c_2}{m_2}+\dfrac{B_2(t)}t\right)=-a_2-\dfrac{\beta^2}2+\dfrac{c_2}{m_2}\leq0\text{ a.s.}$$
By the comparison theorem,
$$\limsup\limits_{t\to\infty}\dfrac1t\ln y(t)\leq \limsup\limits_{t\to\infty}\dfrac1t\ln \psi(t)\leq \lim\limits_{t\to\infty}\dfrac1t\ln \tilde \psi(t)\leq 0\text{ a.s.}
$$
Thus, \eqref{e2.7} holds without an additional condition.
}
\limsup\limits_{t\to\infty}\dfrac1t\ln y(t)\leq 0 ,
\end{equation}

and
\begin{equation}\label{e2.8}
\limsup\limits_{t\to\infty}\dfrac1t\int_0^ty^p(s)ds\leq \hat K_p\,\mbox{ for some constant } \hat K_p>0.
\end{equation}
Define the threshold
$$\lambda:=-a_2-\dfrac{\beta^2}2+\int_0^\infty\dfrac{c_2x}{m_1+m_2x}\mu_-(dx)=-a_2-\dfrac{\beta^2}2+\dfrac{a^q}{\Gamma(q)}\int_{0}^{\infty}\dfrac{c_2x^{q}e^{-ax}}{m_1+m_2x}dx.$$

\begin{thm}\label{thm2.1}
If $\lambda<0$, then the predator is eventually extinct, that is, $\lim\limits_{t\to\infty}y(t)=0$ a.s. Moreover, as $t\to\infty$ the distribution of $x(t)$ converges weakly  to $\mu_-(\cdot)$ that is the Gamma distribution with parameters $q=\dfrac{2a_1}{\alpha^2}-1$ and $a=\dfrac{2b_1}{\alpha^2}$, respectively.
 \end{thm}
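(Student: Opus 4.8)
The plan is to treat the two assertions separately: the extinction $y(t)\to0$ via an estimate of the top Lyapunov exponent of the predator, and the weak convergence of $x(t)$ via a comparison sandwich between one-dimensional ergodic diffusions. For extinction, I would apply It\^o's formula to $\ln y(t)$, giving
$$d\ln y(t)=\Big(-a_2-\frac{\beta^2}2-b_2y(t)+\frac{c_2x(t)}{m_1+m_2x(t)+m_3y(t)}\Big)dt+\beta\,dB_2(t).$$
Dividing by $t$, integrating, and letting $t\to\infty$, the terms $t^{-1}\ln y(0)$ and $\beta B_2(t)/t$ vanish (the latter by the strong law for Brownian motion) while $-b_2t^{-1}\int_0^t y(s)\,ds\le0$. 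Since $m_3\ge0$ and $y\ge0$, the interaction term is dominated by $c_2x(s)/(m_1+m_2x(s))$, which is increasing in $x$; combined with the comparison $x(s)\le\varphi(s)$ this yields $\frac{c_2x(s)}{m_1+m_2x(s)+m_3y(s)}\le\frac{c_2\varphi(s)}{m_1+m_2\varphi(s)}$. As $x\mapsto c_2x/(m_1+m_2x)$ is bounded and continuous, the ergodicity of $\varphi$ (the same strong-law mechanism behind \eqref{e2.3}) gives $t^{-1}\int_0^t\frac{c_2\varphi(s)}{m_1+m_2\varphi(s)}\,ds\to\int_0^\infty\frac{c_2x}{m_1+m_2x}\mu_-(dx)$ almost surely. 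Collecting the estimates yields $\limsup_{t\to\infty}t^{-1}\ln y(t)\le\lambda<0$, so $y(t)\to0$ (indeed exponentially fast) a.s.

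For the weak convergence of $x(t)$ I would squeeze its law between two logistic-type diffusions. The upper bound is immediate: $x(t)\le\varphi(t)$, and $\varphi(t)$ started from the deterministic $x(0)$ converges weakly to $\mu_-$. For the lower bound, fix a small $\eps>0$ with $a_1-\eps>\alpha^2/2$. Because $y(t)\to0$ and the denominator exceeds $m_1$, there is an a.s.\ finite random time $T_\eps$ with $\frac{c_1y(t)}{m_1+m_2x(t)+m_3y(t)}\le\eps$ for $t\ge T_\eps$. Comparing $x$ on $[T_\eps,\infty)$ with the solution $\Phi_\eps$ of the SDE driven by the same noise and having drift coefficient $a_1-\eps-b_1\Phi_\eps$, started at $x(T_\eps)$, gives $x(t)\ge\Phi_\eps(t)$ there; $\Phi_\eps$ has stationary law $\mu_-^\eps$, the Gamma distribution with parameters $q_\eps=\frac{2(a_1-\eps)}{\alpha^2}-1$ and $a$. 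For each continuity point $r>0$ of the limiting CDFs one then obtains $\PP(\varphi(t)\le r)\le\PP(x(t)\le r)$, and $\PP(x(t)\le r)\le\PP(\Phi_\eps(t)\le r)+\PP(t<T_\eps)$. Letting $t\to\infty$ squeezes $\liminf$ and $\limsup$ of $\PP(x(t)\le r)$ between $\mu_-((0,r])$ and $\mu_-^\eps((0,r])$, using $\PP(t<T_\eps)\to0$. Since Gamma$(q_\eps,a)\Rightarrow$ Gamma$(q,a)=\mu_-$ as $\eps\to0$, the two bounds coincide and weak convergence follows.

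The routine pieces are the It\^o computation, the comparison theorem, and the ergodic average. The main obstacle is the lower bound: because $T_\eps$ and the initial position $x(T_\eps)$ are random and $\F_{T_\eps}$-measurable, I cannot read off the law of $\Phi_\eps(t)$ directly from $\mu_-^\eps$. I would resolve this by conditioning on $\F_{T_\eps}$ and invoking the strong Markov property together with the fact that a non-degenerate one-dimensional diffusion converges to its stationary law in total variation uniformly from any starting point; since $t-T_\eps\to\infty$ a.s., the conditional law of $\Phi_\eps(t)$ tends to $\mu_-^\eps$ and dominated convergence removes the conditioning. Care is also needed to keep the comparison valid, as the coefficients are only locally Lipschitz: I would localize on compact subsets of $\R^{2,\circ}_+$ and use the strict positivity of the solutions to justify the one-dimensional comparison theorem.
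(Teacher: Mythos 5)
Your extinction argument is sound and is essentially the paper's: the paper introduces an auxiliary process $\bar y(t)$ with drift coefficient $-a_2-b_2\bar y+\frac{c_2\varphi}{m_1+m_2\varphi}$ dominating $y(t)$ and computes $\limsup_t t^{-1}\ln\bar y(t)\le\lambda$, whereas you apply It\^o directly to $\ln y(t)$ and bound the interaction term by $\frac{c_2\varphi(s)}{m_1+m_2\varphi(s)}$ using monotonicity and $x\le\varphi$; the two computations are the same estimate and both correctly use the ergodic average of $\varphi$ from \eqref{e2.3}. For the second assertion the paper gives no argument at all, deferring to \cite[Lemma 7]{RR}; your sandwich between $\varphi$ and the $\eps$-perturbed logistic diffusion $\Phi_\eps$, followed by $\mathrm{Gamma}(q_\eps,a)\Rightarrow\mathrm{Gamma}(q,a)$, is precisely the kind of argument being cited, so supplying it is a genuine contribution rather than a deviation.

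There is, however, one step in your lower bound that fails as written. The random time $T_\eps=\inf\{T:\ \frac{c_1y(t)}{m_1+m_2x(t)+m_3y(t)}\le\eps\ \forall t\ge T\}$ is defined through the entire future of the path, so it is \emph{not} a stopping time; $\F_{T_\eps}$ is not defined in the usual sense and the strong Markov property cannot be invoked at $T_\eps$. The standard repair: fix a deterministic $s$ and set $A_s=\{\sup_{t\ge s}\frac{c_1y(t)}{m_1+m_2x(t)+m_3y(t)}\le\eps\}$, so that $\PP(A_s)\to1$ as $s\to\infty$ by extinction of $y$; start $\Phi_\eps$ at the deterministic time $s$ from the $\F_s$-measurable point $x(s)$, and justify the pathwise comparison $x(t)\ge\Phi_\eps(t)$ up to the genuine stopping time $\sigma_s=\inf\{t\ge s:\ \frac{c_1y(t)}{m_1+m_2x(t)+m_3y(t)}>\eps\}$, which equals $+\infty$ on $A_s$. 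This gives $\PP(x(t)\le r)\le\PP(\Phi_\eps(t)\le r)+\PP(A_s^c)$, and now conditioning on $\F_s$ (an honest $\sigma$-field, Markov property at a deterministic time) plus dominated convergence yields $\limsup_t\PP(x(t)\le r)\le\mu_-^\eps((0,r])+\PP(A_s^c)$; let $s\to\infty$, then $\eps\to0$. Note also that you do not have, and do not need, total-variation convergence that is \emph{uniform} over starting points for these logistic diffusions; pointwise convergence for each fixed initial state together with dominated convergence is enough, as your own closing sentence in effect uses. With this adjustment the proof is complete and is consistent with the paper.
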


\begin{proof}
Let $\bar y(t)$ be the solution to the equation
\begin{equation}
d \bar y(t)=\bar y(t)\big(-a_2-b_2\bar y(t)+\dfrac{c_2\varphi(t)}{m_1+m_2\varphi(t)}\big)dt+\beta \bar y(t)dB_2(t),
\end{equation}
where $\varphi(t)$ is the solution to \eqref{e2.1}.
By comparison theorem, $y(t)\leq\bar y(t)$ a.s. given that $\varphi(0)=x(0), \bar y(0)= y(0)$.
In view of the It\^o formula and the ergodicity of $\varphi(t)$,
\begin{equation}
\begin{aligned}
\limsup\limits_{t\to\infty}\dfrac1t\ln\bar y(t)=&\limsup\limits_{t\to\infty}\bigg(\dfrac1t\int_0^t\big(-a_2-\dfrac{\beta^2}2-b_2\bar y(s)+\dfrac{c_2\varphi(s)}{m_1+m_2\varphi(s)}\big)ds+\beta\dfrac{B_2(t)}t\bigg)\\
\leq&\lim\limits_{t\to\infty}\dfrac1t\int_0^t\big(-a_2-\dfrac{\beta^2}2
+\dfrac{c_2\varphi(s)}{m_1+m_2\varphi(s)}\big)ds+\beta\lim\limits_{t\to\infty}\dfrac{B_2(t)}t
= \lambda<0\ \hbox{ a.s.}
\end{aligned}
\end{equation}
That is, $y(t)$ converges to 0 at an exponential rate almost surely.
The remaining
part of the assertion can be proved by the arguments in \cite[Lemma 7]{RR}.
\end{proof}

\begin{thm}\label{thm2.2}
If $\lambda>0$, the process $(x(t), y(t))$ has an invariant probability measure concentrated on $\R^{2,\circ}_+$.
\end{thm}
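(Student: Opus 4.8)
The plan is to obtain the measure by the Krylov--Bogolyubov procedure and then to use the strict positivity of $\lambda$ to show that no mass leaks to the boundary. First I would introduce the pathwise occupation measures $\Pi_t=\frac1t\int_0^t\delta_{(x(s),y(s))}\,ds$. The a priori bounds \eqref{e2.5} and \eqref{e2.8} give $\limsup_{t}\frac1t\int_0^t\big(x^p(s)+y^p(s)\big)\,ds\le K_p+\hat K_p<\infty$ a.s., so by Chebyshev's inequality $\Pi_t(\{x+y>H\})\le (K_p+\hat K_p)H^{-p}$ for large $t$; hence $\{\Pi_t\}$ is a.s.\ tight on $\R^2_+$. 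Along a subsequence $\Pi_{t_n}\Rightarrow\mu$, and since the coefficients of \eqref{me} are smooth with at most polynomial growth the diffusion is Feller; the standard Krylov--Bogolyubov argument (pass to the limit using It\^o's formula, since $\frac1t\int_0^t\mathcal L f(x(s),y(s))\,ds\to0$ for $f\in C_c^\infty$ as $t^{-1}$ times a martingale vanishes) then shows $\mu$ is invariant. It remains to prove $\mu\big(\partial\R^2_+\big)=0$.

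The heart of the matter is to keep mass off $\{y=0\}$, and this is exactly where $\lambda>0$ enters. Near this face the prey marginal is governed by \eqref{e2.1} with ergodic law $\mu_-$, and the predator's per-capita growth rate averages to $\int\big(-a_2-\frac{\beta^2}2+\frac{c_2x}{m_1+m_2x}\big)\mu_-(dx)=\lambda>0$, so $\ln y$ is pushed up. To make this quantitative I would introduce the corrector $h$ solving the one-dimensional Poisson equation $\mathcal L_0 h=\frac{c_2x}{m_1+m_2x}-\bar g$ for the boundary generator $\mathcal L_0$ of \eqref{e2.1}, where $\bar g=\int\frac{c_2x}{m_1+m_2x}\mu_-(dx)=a_2+\frac{\beta^2}2+\lambda$; the explicit Gamma density makes $h$ and its growth controllable. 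Setting $V=-\ln y+h(x)$ and writing $\mathcal L$ for the generator of \eqref{me}, a direct computation gives
\[
\mathcal L V=-\lambda+b_2y+\Big(\tfrac{c_2x}{m_1+m_2x}-\tfrac{c_2x}{m_1+m_2x+m_3y}\Big)-h'(x)\,x\,\tfrac{c_1y}{m_1+m_2x+m_3y},
\]
and every term after $-\lambda$ is $O(y)$ uniformly for $x$ in a fixed compact subinterval of $(0,\infty)$, so $\mathcal L V\le-\lambda/2$ once $y$ is small and $x$ is bounded and bounded away from $0$.

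I would then splice $V$ with coercive terms to obtain a global $W\in C^2(\R^{2,\circ}_+)$ with $W\to+\infty$ on $\partial\R^2_+$ and at infinity and $\mathcal L W\le-1$ off a compact $\mathcal K\subset\R^{2,\circ}_+$: a term $\frac1\eta(x+y)^\eta$ with small $\eta>0$ controls $x+y\to\infty$ through the dissipative $-b_1x^2,-b_2y^2$ contributions, while $-\kappa\ln x$ controls the face $\{x=0\}$ via $\mathcal L(-\ln x)=-(a_1-\frac{\alpha^2}2)+b_1x+\frac{c_1y}{m_1+m_2x+m_3y}$, which is $\le-\frac12(a_1-\frac{\alpha^2}2)<0$ near $\{x=0\}$ when $y$ is also small (here $a_1>\alpha^2/2$ is used). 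Given such a $W$, Dynkin's formula yields $\frac1t\int_0^t\PP\{(x(s),y(s))\notin\mathcal K\}\,ds\to0$, so every weak limit $\mu$ charges $\mathcal K$; letting $\mathcal K$ exhaust $\R^{2,\circ}_+$ forces $\mu(\partial\R^2_+)=0$, and $\mu$ is the desired invariant measure concentrated on $\R^{2,\circ}_+$.

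The main obstacle is the construction in the last two paragraphs. Producing the corrector $h$ with derivative bounds strong enough to absorb $h'(x)\,x\,\frac{c_1y}{m_1+m_2x+m_3y}$ into the $O(y)$ remainder is delicate (though tractable because $\mathcal L_0$ is a scalar operator with the explicit stationary Gamma law), and, more seriously, the two boundary faces interact in the corner where $x$ is small while $y$ is only moderate: there both $\mathcal L(-\ln x)$ and $\mathcal L(-\ln y)$ can be positive. The way out is dynamical rather than pointwise: when $x$ is small the predator starves, $\frac{c_2x}{m_1+m_2x+m_3y}\to0$ drives the $y$-drift below $-a_2$, so $y$ decreases into the regime already handled; encoding this requires ordering the weights $M,\kappa,\eta$ carefully so that no region is left with $\mathcal L W>0$, and this bookkeeping is the crux of the argument.
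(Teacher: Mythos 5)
Your first half (Krylov--Bogolyubov from the tightness supplied by \eqref{e2.5} and \eqref{e2.8}, plus the Feller property) is sound and is essentially the same device the paper uses via \eqref{e2.19} and the citation of Meyn--Tweedie. But your second half takes a genuinely different route from the paper, and it is there that the argument has a real gap. The paper never builds a Lyapunov function near the boundary: it applies It\^o's formula to $\ln y$ and $\ln x$, compares $x(t)$ with the logistic solution $\varphi(t)$ of \eqref{e2.1}, and uses the ergodicity of $\varphi$ to extract the two pathwise time-average inequalities \eqref{e2.12} and \eqref{e2.14}; a linear combination gives $\liminf_{t\to\infty}\frac1t\int_0^t y(s)\,ds\ge\bar m>0$ a.s., H\"older's inequality converts this into a positive occupation density of the compact set $A=\{0<x\le H,\ \hbar\le y\le H\}$, and mass on the face $\{x=0,\ y>0\}$ is excluded afterwards simply because that face is transient ($y\to 0$ when $x(0)=0$). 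No pointwise generator inequality near $\partial\R^2_+$ is ever needed.

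The gap in your plan is precisely the step you defer: the existence of a global $W\in C^2(\R^{2,\circ}_+)$, blowing up on $\partial\R^2_+$ and at infinity, with $\mathcal{L}W\le-1$ off a compact subset of $\R^{2,\circ}_+$. Your corrected function $V=-\ln y+h(x)$ only controls the face $\{y=0\}$ (and your own estimate is stated for $x$ in a compact subinterval of $(0,\infty)$), while in the strip where $x\to0$ and $y$ stays of order one the only ingredient of $W$ that is unbounded is $-\kappa\ln x$, whose drift $\mathcal{L}(-\ln x)=-(a_1-\tfrac{\alpha^2}2)+b_1x+\tfrac{c_1y}{m_1+m_2x+m_3y}$ is \emph{positive} whenever $\tfrac{c_1y}{m_1+m_3y}>a_1-\tfrac{\alpha^2}2$ --- which happens for admissible parameters, e.g.\ $m_3=0$ and $y$ above a fixed level. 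In that strip $W\to+\infty$ while every unbounded term of $\mathcal{L}W$ has the wrong sign, and no ordering of the weights $M,\kappa,\eta$ repairs a pointwise inequality; your proposed remedy (``$y$ decreases into the regime already handled'') is a statement about the flow over a positive time horizon and cannot be encoded in an inequality for $\mathcal{L}W$ at a single point. Making that idea rigorous would require replacing $\mathcal{L}W\le-1$ by a semigroup contraction $\E\,W\big(x(T),y(T)\big)\le\rho\,W\big(x(0),y(0)\big)+C$ for a fixed $T>0$, proved by integrating the drift estimates along trajectories --- a substantially heavier construction that your sketch does not supply, and which the paper's direct comparison-plus-time-average argument renders unnecessary.
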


\begin{proof} For any initial value $(x(0), y(0))\in\R^{2,\circ}_+$, we have
\begin{equation}\label{e2.11}
\begin{aligned}
\dfrac1t\ln y(t)=&-\dfrac1t\int_0^tb_2y(s)ds+\dfrac1t\int_0^t\Big(-a_2-\dfrac{\beta^2}2+\dfrac{c_2\varphi(s)}{m_1+m_2\varphi(s)}\Big)ds\\
&-\dfrac1t\int_0^t\Big(\dfrac{c_2\varphi(s)}{m_1+m_2\varphi(s)}-\dfrac{c_2x(s)}{m_1+m_2x(s)}\Big)ds\\
&-\dfrac1t\int_0^t\Big(\dfrac{c_2x(s)}{m_1+m_2x(s)}-\dfrac{c_2x(s)}{m_1+m_2x(s)+m_3y(s)}\Big)ds+\beta \dfrac{B_2(t)}t\\
\geq& \dfrac1t\int_0^t\Big(-a_2-\dfrac{\beta^2}2+\dfrac{c_2\varphi(s)}{m_1+m_2\varphi(s)}\Big)ds\\
&-\dfrac1t\int_0^t\Big(\dfrac{c_2}{m_1}(\varphi(s)-x(s))+\big(\dfrac{c_2m_3}{m_1m_2}+b_2\big)y(s)\Big)ds+\beta \dfrac{B_2(t)}t.
\end{aligned}
\end{equation}
Letting $t\to\infty$,  \eqref{e2.7} and \eqref{e2.11} yield that
\begin{equation}\label{e2.12}
\liminf\limits_{t\to\infty}\dfrac1t\int_0^t\Big(\dfrac{c_2}{m_1}
(\varphi(s)-x(s))+\big(\dfrac{c_2m_3}{m_1m_2}+b_2\big)y(s)\Big)ds\geq\lambda\ \hbox{ a.s.}
\end{equation}
Similarly, we have
\begin{equation}\label{e2.13}
\begin{aligned}
\dfrac1t\ln x(t)=&\dfrac1t\int_0^t\Big(a_1-\dfrac{\alpha^2}2-b_1\varphi(s)\Big)ds\\
&+\dfrac1t\int_0^t\Big(b_1(\varphi(s)-x(s))-\dfrac{c_1y(s)}{m_1+m_2x(s)+m_3y(s)}\Big)ds+\alpha\dfrac{B_1(t)}t\\
\geq& \dfrac1t\int_0^t\Big(a_1-\dfrac{\alpha^2}2-b_1\varphi(s)\Big)ds+\dfrac1t\int_0^t\Big(b_1(\varphi(s)-x(s))-\dfrac{c_1y(s)}{m_1}\Big)ds+\alpha\dfrac{B_1(t)}t.
\end{aligned}
\end{equation}
It follows from \eqref{e2.3}, \eqref{e2.4}, and \eqref{e2.13} that
\begin{equation}\label{e2.14}
\liminf\limits_{t\to\infty}\dfrac1t\int_0^t\Big(-b_1(\varphi(s)-x(s))+\dfrac{c_1}{m_1}y(s)\Big)ds\geq0\ \hbox{ a.s.}
\end{equation}
Dividing both sides of \eqref{e2.12} and \eqref{e2.14} by $\dfrac{c_2}{m_1}$ and $b_1$, respectively,
and adding them side by side, we have
\begin{equation}
\liminf\limits_{t\to\infty}\dfrac1t\int_0^t y(s)ds\geq \dfrac{b_1m^2_1m_2\lambda}{c_1c_2m_2+b_1c_2m_1m_3+b_1b_2m_1^2m_2}=:\bar m>0\ \hbox{ a.s.}
\end{equation}
For $0<\hbar<\bar m<H<\infty$,  H\"older's inequality yields that
$$\dfrac1t\int_0^t\1_{\{y(s)\geq\hbar\}}y(s)ds\leq \bigg(\dfrac1t\int_0^t\1_{\{y(s)\geq\hbar\}}ds\bigg)^{\frac12}\bigg(\dfrac1t\int_0^ty^2(s)
ds\bigg)^{\frac12},$$
which implies that
\begin{equation}\label{e2.16}
\begin{aligned}
\liminf\limits_{t\to\infty}&\dfrac1t\int_0^t\1_{\{y(s)\geq\hbar\}}ds\geq \Big(\liminf\limits_{t\to\infty}\dfrac1t\int_0^t\1_{\{y(s)\geq\hbar\}}y(s)ds\Big)^2 \Big(\limsup\limits_{t\to\infty}\dfrac1t\int_0^ty^2(s)ds\Big)^{-1}\\
\geq& \Big(\liminf\limits_{t\to\infty}\dfrac1t\int_0^ty(s)ds-\hbar\Big)^2 \Big(\limsup\limits_{t\to\infty}\dfrac1t\int_0^ty^2(s)ds\Big)^{-1}
\geq \dfrac{(\bar m-\hbar)^2}{\hat K_2}\ \hbox{ a.s.}
\end{aligned}
\end{equation}
In addition, \eqref{e2.5} and \eqref{e2.8} imply that
\begin{equation}\label{e2.17}
\begin{aligned}
\limsup\limits_{t\to\infty}\dfrac1t\int_0^t\1_{\{y(s)\geq H\}}ds\leq&\dfrac1H\limsup\limits_{t\to\infty}\dfrac1t\int_0^ty(s)ds\leq \dfrac{\hat K_1}{H},\ \hbox{ a.s.,}\\
\limsup\limits_{t\to\infty}\dfrac1t\int_0^t\1_{\{x(s)\geq H\}}ds\leq&\dfrac1H\limsup\limits_{t\to\infty}\dfrac1t\int_0^tx(s)ds\leq \dfrac{K_1}{H} \ \hbox{ a.s.}
\end{aligned}
\end{equation}
It follows from \eqref{e2.16} and \eqref{e2.17} that for $\hbar<\dfrac{\bar m}2, H> \dfrac{8(K_1+\hat K_1)\hat K_2}{\bar m^2}$,
\begin{equation}\label{e2.18}
\liminf\limits_{t\to\infty}\dfrac1t\int_0^t\1_{\{(x(s), y(s))\in A\}}ds\geq \dfrac{(\bar m-\hbar)^2}{\hat K_2}-\dfrac{K_1+\hat K_1}H>\dfrac{\bar m^2}{8\hat K_2}\  \hbox{ a.s.,}
\end{equation}
where $A=\{(x, y): 0< x\leq H, \hbar\leq y\leq H\}.$
By virtue of Fatou's Lemma, we have
\begin{equation}\label{e2.19}
\liminf\limits_{t\to\infty}\dfrac1t\int_0^tP(s, (x, y), A)ds\geq \dfrac{\bar m^2}{8\hat K_2}\,\forall (x,y)\in \R_+^{2,\circ},
\end{equation}
where $P(t, (x, y), \cdot)$ is the transition probability of $(x(t), y(t))$.
By the invariance of $\M=\{x\geq0, y>0\}$ under equation \eqref{me}, we can consider the Markov process $(x(t), y(t))$ on the state space $\M$.
It is easy to show that $(x(t), y(t))$ has the Feller property. Thus,
inequality \eqref{e2.19} implies that there is an invariant probability measure $\mu^*$ on $\M$; see
\cite{MT}.
Since $y(t)\to0$ provided that $ x(0)=0$,  $\lim_{t\to\infty}P(t, (0, y), K)=0$  for all compact set $K\subset \M$.
Thus, we must have $\mu^*(\{x=0, y>0\})=0$ (equivalently $\mu^*(\R_+^{2,\circ})=1$). Furthermore,
by  the invariance of  $\R_+^{2,\circ}$,  $\mu^*$ is an invariant probability measure of $(x(t), y(t))$ on $\R_+^{2,\circ}$.
\end{proof}

Since $B_1(\cdot)$ and $B_2(\cdot)$ are independent, the diffusion is non-degenerate. It is well known that the existence of an invariant probability measure is equivalent to positive recurrence. Hence, the invariant probability is unique and the strong law of large numbers holds; see \cite[Theorems 3.1, 3.3]{RK}.
We have the following result.

\begin{thm}\label{thm2.3}
If $\lambda>0$, \eqref{me} has a unique invariant probability measure $\mu^*$ with support $\R^{2,\circ}_+$.
Moreover,
\begin{itemize}
\item[{\rm (a)}] For any $\mu^*$-integrable $f(x, y): \R^{2,\circ}_+\to\R$, we have
$$\lim\limits_{t\to\infty}\dfrac1t\int_0^tf(x(s), y(s))ds=\int f(x,y)\mu^*(dx, dy) a.s.\,\forall (x(0), y(0))\in\R^{2,\circ}_+.$$
\item[{\rm (b)}] $\lim\limits_{t\to\infty}\|P(t, (x, y), \cdot)-\mu^*(\cdot)\|=0\,\forall (x,y)\in\R^{2,\circ}_+$ where $\|\cdot\|$ is the total variation norm.
\end{itemize}
\end{thm}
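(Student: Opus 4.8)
The plan is to deduce the entire statement from a single structural fact: since $B_1(\cdot)$ and $B_2(\cdot)$ are independent, the generator of $(x(t),y(t))$ is the (locally) uniformly elliptic operator whose diffusion matrix is $\diag(\alpha^2x^2,\beta^2y^2)$, strictly positive definite on $\R^{2,\circ}_+$. First I would invoke the standard dichotomy for non-degenerate diffusions \cite{RK}: the existence of an invariant probability measure is equivalent to positive recurrence. Theorem \ref{thm2.2} has already produced such a measure $\mu^*$ concentrated on $\R^{2,\circ}_+$, so the process is positive recurrent, and \cite[Theorems 3.1, 3.3]{RK} then deliver at once the uniqueness of $\mu^*$ and the strong law of large numbers asserted in item (a).

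For the support claim the key step is irreducibility, namely that $P(t,(x,y),U)>0$ for every $t>0$, every $(x,y)\in\R^{2,\circ}_+$, and every nonempty open $U\subset\R^{2,\circ}_+$. Because the diffusion is locally uniformly elliptic on $\R^{2,\circ}_+$, the transition kernel admits a smooth density that is strictly positive throughout $\R^{2,\circ}_+$; this can be read off either from the classical positivity of the fundamental solution of a non-degenerate parabolic operator or, equivalently, from the Stroock--Varadhan support theorem applied to the associated control system, whose reachable set from any interior point is all of $\R^{2,\circ}_+$. Feeding this into the invariance identity $\mu^*(U)=\int_{\R^{2,\circ}_+}P(t,(x,y),U)\,\mu^*(dx,dy)$ gives $\mu^*(U)>0$ for every such $U$, whence $\supp(\mu^*)=\R^{2,\circ}_+$.

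For the total variation convergence in item (b), I would combine the strong Feller property with this irreducibility. Non-degenerate diffusions with smooth coefficients are strong Feller, and the strict positivity of the density above supplies irreducibility; together with positive recurrence these are exactly the hypotheses under which the Meyn--Tweedie / Khasminskii ergodic theory \cite{RK} upgrades the process to positive Harris recurrence and yields $\norm{P(t,(x,y),\cdot)-\mu^*(\cdot)}\to0$ in total variation for each fixed initial point $(x,y)\in\R^{2,\circ}_+$, which is the statement of (b).

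The main obstacle I anticipate is the irreducibility / positivity-of-density step: although ellipticity makes it morally automatic, one must secure it on the non-compact, boundary-touching state space $\R^{2,\circ}_+$ strongly enough to drive both the support characterization and the total-variation ergodic theorem, and in particular to rule out leakage of mass toward $\{x=0\}\cup\{y=0\}$ or to infinity. This is precisely where positive recurrence, rooted in the Lyapunov-type estimates \eqref{e2.18}--\eqref{e2.19} underlying Theorem \ref{thm2.2}, does the essential work.
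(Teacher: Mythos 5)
Your argument follows the same route as the paper: the paper likewise observes that non-degeneracy of the diffusion plus the invariant measure already produced in Theorem \ref{thm2.2} gives positive recurrence, cites \cite[Theorems 3.1, 3.3]{RK} for uniqueness and the strong law in (a), and cites \cite[Proposition 5.1]{IK} (or \cite{LB}) for the total variation convergence in (b). The only difference is that you spell out the irreducibility/strict-positivity-of-density step and the resulting support characterization, which the paper leaves implicit in those citations; this is a correct and harmless elaboration, not a different proof.
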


\begin{proof}
Assertion (a) was proved in \cite[Theorem 3.3]{RK};
we
refer to \cite[Proposition 5.1]{IK} or \cite{LB} for the proof of assertion (b).
\end{proof}

As a direct corollary of Theorem \ref{thm2.3}, if $\lambda>0$, system \eqref{me} is stochastically permanent in the sense that for any $\eps>0$, there is some $\delta\in(0,1)$ such that
$\liminf\limits_{t\to\infty}P(t, x, y, [\delta, \delta^{-1}]^2)>1-\eps$.
Moreover, it follows from \eqref{e2.5} and \eqref{e2.8} that we have the following limits.
\begin{align*}&\lim\limits_{t\to\infty}\dfrac1t\int_0^tx^p(s)ds=\int x^p\mu^*(dx, dy)\text{ a.s. }\;\forall (x(0), y(0))\in\R^{2,\circ}_+, \ p>0,\\
&\lim\limits_{t\to\infty}\dfrac1t\int_0^ty^p(s)ds=\int y^p\mu^*(dx, dy)\text{ a.s. }\; \forall (x(0), y(0))\in\R^{2,\circ}_+, \ p>0.
\end{align*}

\section{Degenerate Case}\label{sec:deg}
Suppose that $B_1(\cdot)=B_2(\cdot)=W(\cdot)$. We consider the system of equations
\begin{equation}\label{e3.0}
\begin{cases}
dx(t)=x(t)\big(a_1-b_1x(t)-\dfrac{c_1y(t)}{m_1+m_2x(t)+m_3y(t)}\big)dt+\alpha x(t)dW(t), \\
dy(t)=y(t)\big(-a_2-b_2y(t)+\dfrac{c_2x(t)}{m_1+m_2x(t)+m_3y(t)}\big)dt+\beta y(t)dW(t).
\end{cases}
\end{equation}
Owing to the symmetry of the Brownian motion, we can suppose $\alpha>0$.
Since estimates in the previous section still hold for this case, we have $\lim\limits_{t\to\infty}y(t)=0$ when  $\lambda<0$ while $x(t)$ converges weakly to the stationary distribution $\mu_-$ of $\varphi(t)$. In what follows,
we suppose $\lambda>0$ for which the process has an invariant probability measure $\mu^*$ on $\R_+^{2,\circ}$.
Putting $\xi(t)=\ln x(t)$ and $\eta(t)=\ln y(t)$,
equation \eqref{e3.0} becomes
\begin{equation}\label{e3.1}
\begin{cases}
d\xi(t)=\big(a_1-\dfrac{\alpha^2}2-b_1e^{\xi(t)}-\dfrac{c_1e^{\eta(t)}}{m_1+m_2e^{\xi(t)}+m_3e^{\eta(t)}}\big)dt+\alpha dW(t), \\
d\eta(t)=\big(-a_2-\dfrac{\beta^2}2-b_2e^{\eta(t)}+\dfrac{c_2e^{\xi(t)}}{m_1+m_2e^{\xi(t)}+m_3e^{\eta(t)}}\big)dt+\beta dW(t).
\end{cases}
\end{equation}
Denote by  $(\xi^{u,v}(t),\eta^{u,v}(t))$ the solution with initial value $(u,v)$ to \eqref{e3.1} and let $\hat P(t, (u,v), \cdot)$ be its transition probability.
Put
$$A(u, v)=\left(\begin{array}{l}a_1-\dfrac{\alpha^2}2-b_1e^{u}-\dfrac{c_1e^{v}}{m_1+
m_2e^{u}+m_3e^{v}}\\
-a_2-\dfrac{\beta^2}2-b_2e^{v}+\dfrac{c_2e^{u}}{m_1+m_2e^{u}+m_3e^{v}}
\end{array}\right)\,\mbox{ and }\,
B(u, v)=\left(\begin{array}{l}\alpha\\
\beta
\end{array}\right).$$
To proceed, we first
 recall the notion of Lie bracket.
If $X(x)=(X_1, X_2)^\top$ and $Y(x)=(Y_1, Y_2)^\top$ are vector fields on $\R^2$ then the Lie bracket $[X,Y]$ is a vector field given by
$$[X,Y]_i(x)=\Big(X_1 \frac{\partial Y_i}{\partial x_1}(x)-Y_1 \frac{\partial X_i}{\partial x_1}(x)\Big)+\Big(X_2 \frac{\partial Y_i}{\partial x_2}(x)-Y_2 \frac{\partial X_i}{\partial x_2}(x)\Big), \ i=1,2.$$
We impose the following condition.

\begin{asp}\label{asp3.1}
The Lie algebra $\L(u, v)$ generated by $A(u, v), B(u, v)$ satisfies dim$\L(u, v)=2$ at every $(u, v)\in\R^2$. In other words, the set of vectors $A, B, [A, B], [A, [A, B]], [B, [A, B]],\dots$ spans $\R^2$.
\end{asp}

This assumption appears to be satisfied for most
practical situations.
It seems to be satisfied for any
 $a_i,b_i,c_i, m_1,m_2,m_3, \alpha>0, i=1, 2$, $\beta\ne 0$ and $a_1-{\alpha^2}/2>0$,
although
verifying this assumption for our model in general involves
cumbersome calculations.
For specific parameters, the assumption can be verified by direct calculations.
%
Note that the set of $(u, v)$ at which vectors $A, B, [A, B], [A, [A, B]], [B, [A, B]],\dots$ do not span $\R^2$ is roots of a system of equations $\det (A, B)=0, \det(A, [A, B])=0, ...$ each of which is a polynomial equation of unknowns $e^u, e^v$.
Thus, we can show that there is no $(u, v)$ satisfying the above system of equations after taking into account a sufficient number of these equations.

To describe the support of the invariant measure $\mu^*$ and to prove the ergodicity of \eqref{e3.1}, we need to investigate the following control system
\begin{equation}\label{e3.2}
\left\{\begin{array}{l}\dot u_\phi(t)=\alpha\phi(t)+a_1-\dfrac{\alpha^2}2-b_1e^{u_\phi(t)}-\dfrac{c_1e^{v_\phi(t)}}{m_1+
m_2e^{u_\phi(t)}+m_3e^{v_\phi(t)}},\\
\dot v_\phi(t)=\beta\phi(t)-a_2-\dfrac{\beta^2}2-b_2e^{v_\phi(t)}+\dfrac{c_2e^{u_\phi(t)}}{m_1+m_2e^{u_\phi(t)}+m_3e^{v_\phi(t)}},
\end{array}\right.
\end{equation}
where $\phi$ is taken from the set of piecewise continuous real valued functions defined on $\R_+$.
Let $(u_\phi(t, u,v),$ $ v_\phi(t, u, v))$ be the solution to Equation \eqref{e3.2} with control $\phi$ and initial value $(u,v)$.
Denote by ${\cal O}_1^+(u, v)$ the reachable set from $(u, v)$, that is the set of $(u', v')\in\R^2$ such that there exists a $t\geq0$ and a control $\phi(\cdot)$ satisfying
$u_\phi(t, u, v)=u', v_\phi(t, u, v)=v'$.
It should be noted that Assumption \ref{asp3.1} guarantees the accessibility of \eqref{e3.2}, i.e., ${\cal O}_1^+(u, v)$ has non-empty interior for every $(u, v)\in\R^2$ (see \cite{Ju}).
We first recall some concepts introduced in  \cite{WK}.
Let $U$ be a subset of $\R^2$ satisfying the property that for any $w_1, w_2\in U$, we have $w_2\in \bar{{\cal O}^+_1(w_1)}$.
Then there is a unique maximal set $V\supset U$ such that this property still holds for $V$. Such $V$ is called a control set.
A control set $C$ is said to be invariant if $\bar{{\cal O}^+_1(w)}\subset\bar C$ for all $w\in C$.

Putting $z_\phi=v_\phi-\frac{\beta}\alpha u_\phi$, we have an equivalent system
\begin{equation}\label{e3.3}
\left\{\begin{array}{l}\dot u_\phi(t)=\alpha\phi(t)+g(u_\phi(t),z_\phi(t)),\\
\dot z_\phi(t)=h(u_\phi(t), z_\phi(t)),
\end{array}\right.
\end{equation}
where
$$g(u,z )=a_1-\dfrac{\alpha^2}2-b_1e^{u }-\dfrac{c_1e^{z }e^{\frac\beta\alpha u }}{m_1+m_2e^{u }+m_3e^{z }e^{\frac\beta\alpha u }},$$
and
$$h(u, z)=-\Big(a_2+\dfrac{\beta^2}2+\dfrac\beta\alpha(a_1-\dfrac{\alpha^2}2)\Big)-b_2e^{z}e^{\frac\beta\alpha u}+\dfrac\beta\alpha b_1e^{u}+\dfrac{c_2e^{u}+\dfrac\beta\alpha c_1e^{z+\frac\beta\alpha u}}{m_1+m_2e^{u}+m_3e^{z+\frac\beta\alpha u}}.$$
Denote by ${\cal O}^+_2(u, z)$ the set of $(u', z')\in\R^2$ such that there is a $t>0$ and a control $\phi(\cdot)$ such that
$u_\phi(t, u, z)=u', z_\phi(t, u, v)=z'$.

\para{Claim 1.} For any $u_0, u_1, z_0\in\R$ and $\eps>0$, there exists a control $\phi$ and some $T>0$ such that
$u_\phi(T, u_0, z_0)=u_1$, $|z_\phi(T, u_0, z_0)-z_0|<\eps$.

For the proof,
suppose that $u_0<u_1$ and let $\rho_1=\sup\{|g(u, z)|, |h(u, z)|: u_0\leq u\leq u_1, |z-z_0|\leq\eps\}.$ We choose  $\phi(t)\equiv\rho_2$ with $\left(\alpha\rho_2\rho_1^{-1}-1\right)\eps\geq u_1-u_0$. It is easy to check that with this control, there is a $T\in[0,\eps\rho_1^{-1}]$ such that $u_\phi(T, u_0, z_0)=u_1$, $|z_\phi(T, u_0, z_0)-z_0|<\eps$.
If $u_0>u_1$, we can construct $\phi(t)$ similarly.

\para{Claim 2.} For any $z_0>z_1$, there is a $u_0\in\R$, a control $\phi$, and some $T>0$ such that $z_\phi(T, u_0, z_0)=z_1$ and that $u_\phi(t, u_0, z_0)=u_0\,\forall\, 0\leq t\leq T$.

Indeed, if $\beta>0$ and $-u_0$ is sufficiently large, there is a $\rho_3>0$ such that
$h(u_0, z)<-\rho_3\,\forall z_1\leq z\leq z_0$. This property, combining with \eqref{e3.3}, implies the existence of a control $\phi$ and a $T>0$ satisfying the desired claim. In case $\beta<0$, choosing $u_0$
to be sufficiently
large,
we have the same result.

\para{Claim 3.} If $0<\beta<\alpha$, for any $z_0<z_1$, if $u_0$ is sufficiently large, $\inf_{z\in[z_0,z_1]}h(u_0, z)>0$, which implies that there is a control $\phi$ and a $T>0$ satisfying $z_\phi(T, u_0, z_0)=z_1$ and  $u_\phi(t, u_0, z_0)=u_0\,\forall 0\leq t\leq T$.

\begin{lm}\label{lm3.1}
 Suppose $\beta<0$ or $\beta\geq\alpha$. Let $ c^*:=\sup\Big\{\bar z: \sup\limits_{u\in R}\{h(u, z)\}>0\,\forall\, z\leq \bar z.\Big\}$.
Then $ c^*>-\infty$, $( c^*$ may be $\infty)$ and for any $(u, z)\in\R^2$, $\bar{{\cal O}^+_2(u, z)}\supset\{(u',z'): z'\leq c^*\}$.
\end{lm}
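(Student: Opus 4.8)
The plan is to prove the two assertions separately: first $c^*>-\infty$, then the reachability inclusion.

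For $c^*>-\infty$ it suffices to exhibit a single threshold $\bar z_0$ below which some choice of $u$ already forces $h>0$, since then $\sup_u h(u,z)>0$ for every $z\le\bar z_0$ and so $\bar z_0$ lies in the defining set of $c^*$. I would test $h$ along the vertical line $u=\ln K_1$, where $K_1=\frac{a_1-\alpha^2/2}{b_1}=\int_0^\infty x\,\mu_-(dx)$ is the mean of the boundary distribution. Because the terms of $h$ carrying the factor $e^{z+(\beta/\alpha)u}$ vanish as $z\to-\infty$ (regardless of the sign of $\beta$, since $e^z\to0$), a direct computation gives
\[
\lim_{z\to-\infty} h(\ln K_1,z)= -\Big(a_2+\tfrac{\beta^2}2+\tfrac{\beta}{\alpha}\big(a_1-\tfrac{\alpha^2}2\big)\Big)+\tfrac{\beta}{\alpha}b_1K_1+\frac{c_2K_1}{m_1+m_2K_1}.
\]
Using $a_1-\alpha^2/2=b_1K_1$, the two terms $\pm\frac{\beta}{\alpha}b_1K_1$ cancel, leaving $-a_2-\frac{\beta^2}2+\frac{c_2K_1}{m_1+m_2K_1}$. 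Since $x\mapsto\frac{c_2x}{m_1+m_2x}$ is concave, Jensen's inequality yields $\frac{c_2K_1}{m_1+m_2K_1}\ge\int_0^\infty\frac{c_2x}{m_1+m_2x}\mu_-(dx)$, so the limit is $\ge\lambda>0$. Hence $h(\ln K_1,z)>0$ for all $z$ below some $\bar z_0$, giving $c^*\ge\bar z_0>-\infty$. I find it worth noting that this single computation handles both $\beta<0$ and $\beta\ge\alpha$ uniformly, the cancellation of the $b_1K_1$ terms being insensitive to the sign of $\beta$.

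For the inclusion $\overline{\mathcal{O}_2^+(u,z)}\supset\{(u',z'):z'\le c^*\}$ I would combine three ingredients along \eqref{e3.3}: horizontal motion (Claim~1, moving $u$ to any prescribed value while changing $z$ by less than a preassigned amount), downward motion (Claim~2, lowering $z$ arbitrarily at a suitably fixed $u_0$, which is available both for $\beta<0$ and for $\beta\ge\alpha$), and a climbing step. Fix a target $(u',z')$ with $z'<c^*$ and an $\eps>0$. First use Claims~1 and~2 to drive the state to some $(u_0,z_{\mathrm{low}})$ with $z_{\mathrm{low}}<z'$. To raise $z$, observe that choosing $\phi(t)=-g(u,z_\phi(t))/\alpha$ freezes the first coordinate at $u$ while the second obeys $\dot z_\phi=h(u,z_\phi)$, so $z$ increases as long as $h(u,\cdot)>0$. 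For every level $\zeta\in[z_{\mathrm{low}},z']\subset(-\infty,c^*)$ one has $\sup_u h(u,\zeta)>0$ by the definition of $c^*$, so there is a $u_\zeta$ and, by continuity of $h$, a neighborhood of $\zeta$ on which $h(u_\zeta,\cdot)>0$. Covering the compact interval $[z_{\mathrm{low}},z']$ by finitely many such neighborhoods and alternating ``freeze $u=u_{\zeta_i}$ and let $z$ rise'' phases with Claim~1 repositionings of $u$, I climb monotonically to within $\eps$ of $z'$, then a final Claim~1 move sets the first coordinate to $u'$. This reaches an $\eps$-neighborhood of $(u',z')$; letting $\eps\downarrow0$ places $(u',z')$ in the closure, and the boundary case $z'=c^*$ follows by applying the argument to targets $z'_n\uparrow c^*$.

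The main obstacle is the bookkeeping in the climbing step: each Claim~1 repositioning perturbs $z$ by up to the preassigned tolerance, and one must prevent these errors from accumulating so as to overshoot $z'$ or fall out of the next neighborhood of the cover before the chain closes. I would control this by exploiting the finiteness of the cover to fix a uniform lower bound $\min_i\inf h(u_{\zeta_i},\cdot)>0$ on the relevant subintervals, and then choosing the per-step horizontal tolerance far smaller than both the overlaps of the cover and the vertical gain guaranteed on each piece, so that the total $z$-error stays below $\eps$ and the ascent remains monotone. Everything else — admissibility of the freezing control (it is piecewise continuous, as required), the strict inequality $\lambda>0$, and the concavity input — is routine once the finite cover and tolerances are fixed.
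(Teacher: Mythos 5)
Your proof is correct, and its first half is exactly the paper's argument: the paper also evaluates $h$ at $e^{\bar u}=(a_1-\alpha^2/2)b_1^{-1}$ (your $\ln K_1$), cancels the $\frac{\beta}{\alpha}(a_1-\alpha^2/2)$ terms, and invokes Jensen's inequality for the concave map $x\mapsto \frac{c_2x}{m_1+m_2x}$ to conclude $h(\bar u,z)\ge \lambda>0$ in the limit $z\to-\infty$, hence $c^*>-\infty$. For the inclusion, you use the same ingredients as the paper (Claims 1 and 2 give $\overline{{\cal O}^+_2(u_1,z_1)}\supset\{(u',z'):z'\le z_1\}$, and one climbs wherever $\sup_u h(u,\cdot)>0$ by freezing the first coordinate), but you organize the climb differently: you build an explicit finite chain of freeze-and-climb phases over a compact cover of $[z_{\mathrm{low}},z']$, with careful control of the $z$-errors incurred by each Claim~1 repositioning. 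The paper instead sets $\mathfrak{z}_{u,z}=\sup\{z_1:\exists u_1,\ (u_1,z_1)\in\overline{{\cal O}^+_2(u,z)}\}$ and shows $\mathfrak{z}_{u,z}<c^*$ is impossible, since a single climbing step at level $\mathfrak{z}_{u,z}$ (available because $h(\hat u,\mathfrak{z}_{u,z})>0$ for some $\hat u$) would push the reachable set strictly higher; combined with the transitivity property $\overline{{\cal O}^+_2(w_2)}\subset\overline{{\cal O}^+_2(w_1)}$ for $w_2\in\overline{{\cal O}^+_2(w_1)}$, this yields the inclusion with no error bookkeeping at all. Your version is more constructive and makes the trajectory explicit at the cost of the tolerance management you describe; the paper's extremal argument is shorter and avoids that accumulation issue entirely. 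Both are valid.
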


\begin{proof}
Note that $$\lambda=-a_2-\dfrac{\beta^2}2+\int_{0}^\infty\dfrac{c_2 x}{m_1+m_2x}\mu_-(dx)>0.$$ In view of Jensen's inequality,
$$\int_{0}^\infty\dfrac{c_2x}{m_1+m_2x}\mu_-(dx)\leq \dfrac{c_2\int_0^\infty x\mu_-(dx)}{m_1+m_2\int_0^\infty x\mu_-(dx)}=
\dfrac{c_2\big(a_1-\dfrac{\alpha^2}2\big)b_1^{-1}}{m_1+m_2\big(a_1-\dfrac{\alpha^2}2\big)b_1^{-1}}.$$
If $e^{\bar u}=\big(a_1-\dfrac{\alpha^2}2\big)b_1^{-1}$, we have
\begin{align*}
h(\bar u, z)=&\dfrac{c_1\big(a_1-\dfrac{\alpha^2}2\big)b_1^{-1}}{m_1+m_2\big(a_1-\dfrac{\alpha^2}2\big)b_1^{-1}+m_3e^z.e^{\frac{\beta}{\alpha}\bar u}}-\big(a_2+\dfrac{\beta^2}2\big)+b_2e^{z}e^{\frac\beta\alpha \bar u}+\dfrac{\frac\beta\alpha c_1e^{z+\frac\beta\alpha \bar u}}{m_1+m_2e^{\bar u}+m_3e^{z+\frac\beta\alpha\bar  u}}.
\end{align*}
Since $$\dfrac{c_1\big(a_1-\dfrac{\alpha^2}2\big)b_1^{-1}}
{m_1+m_2\big(a_1-\dfrac{\alpha^2}2\big)b_1^{-1}}-\big(a_2+\dfrac{\beta^2}2\big)>0,$$
$h(\bar u, z)>0$ when $e^{z}$ is sufficiently small.
Now we move
to the second assertion. Note that it follows directly from the continuous dependence of solutions on  initial values that if $\bar{{\cal O}^+_2(w_2)}\subset\bar{{\cal O}^+_2(w_1)}$ provided $w_2\in \bar{{\cal O}^+_2(w_1)} \;\;(\omega_1,\omega_2\in\R^2)$.
For $(u, z)\in\R^2$, define $\mathfrak{z}_{u,z}=\sup\big\{z_1: \exists u_1\mbox{ such that } (u_1, z_1)\in \bar{{\cal O}^+_2(u, z)}\big\}$.
For any $(u_1, z_1)\in\R^2$, it is easy to derive from Claims 1 and 2 that $\bar{{\cal O}^+_2(u_1, z_1)}\supset\{(u', z'): z'\leq z_1\}$.
Hence $\bar{{\cal O}^+_2(u, z)}\supset\{(u_1, z_1): z_1\leq \mathfrak{z}_{u, z}\}.$
If $\mathfrak{z}_{u, z}< c^*$, there is some $\hat u\in\R$ such that $h(\hat u, \mathfrak{z}_{u, z})>0$. Since $h(\cdot)$ is continuous, there is an $\hat z>\mathfrak{z}_{u, z}$ such that $\inf\{h(\hat u, z): z\in[\mathfrak{z}_{u, z},\hat z]\}>0$.
As a result, there is a control $\phi$ and a $T>0$ such that $z_\phi(T, \hat u, \mathfrak{z}_{u, z})=\hat z$ and $u_\phi(t, \hat u, \mathfrak{z}_{u, z})=\hat u\,\forall t\in[0, T]$.
 That is, $(\hat u,\hat z)\in{{\cal O}^+_2(\hat u,  \mathfrak{z}_{u, z})}\subset \bar{{\cal O}^+_2(u, z)}$, which contradicts  the definition of $\mathfrak{z}_{u, z}$.
The proof is complete.
\end{proof}

\begin{prop}\label{prop3.1}
The control system \eqref{e3.2} has only one invariant control set $C$. If $0<\beta<\alpha$, $C=\R^2$. If $\beta<0$ or $\beta\geq\alpha$, $C=\{(u, v): v-\dfrac\beta\alpha u\leq c^*\}.$
\end{prop}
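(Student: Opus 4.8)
The plan is to carry out the whole analysis in the $(u,z)$ variables of the equivalent system \eqref{e3.3}, since $z=v-\frac\beta\alpha u$ is an invertible affine change of coordinates; consequently the reachable sets ${\cal O}_1^+$ and ${\cal O}_2^+$, and hence the control sets, correspond under $(u,v)\mapsto(u,v-\frac\beta\alpha u)$, and the half-plane $\{(u,v):v-\frac\beta\alpha u\le c^*\}$ is exactly $\{(u,z):z\le c^*\}$. In the case $0<\beta<\alpha$ I would prove full approximate controllability, i.e. $\overline{{\cal O}_2^+(u,z)}=\R^2$ for every $(u,z)$, which makes $\R^2$ the unique control set and, being the whole space, trivially an invariant one. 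Given a source $(u_0,z_0)$ and a target $(u_1,z_1)$, I chain the three Claims: using Claim 1 I move $u$ (keeping $z$ within $\eps$) to a value at which Claim 3 (if $z_1>z_0$) or Claim 2 (if $z_1<z_0$) applies, then change $z$ to within $\eps$ of $z_1$ at that fixed $u$, and finally use Claim 1 once more to steer $u$ to $u_1$ while keeping $z$ within $\eps$ of $z_1$; letting $\eps\to0$ gives $(u_1,z_1)\in\overline{{\cal O}_2^+(u_0,z_0)}$.

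For $\beta<0$ or $\beta\ge\alpha$, set $D=\{(u,z):z\le c^*\}$. Internal approximate controllability of $D$ is immediate from Lemma \ref{lm3.1}, which gives $w_2\in\{z\le c^*\}\subset\overline{{\cal O}_2^+(w_1)}$ for all $w_1,w_2\in D$. The core step is the positive invariance of $D$. Since the control enters only the $u$-equation, the $z$-dynamics read $\dot z_\phi=h(u_\phi,z_\phi)$, so it suffices to prove the barrier inequality $\sup_{u\in\R}h(u,c^*)\le0$: then at any instant along any trajectory with $z=c^*$ one has $\dot z\le0$, and Nagumo's subtangentiality criterion yields invariance of the closed set $D$. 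To obtain the barrier, write $g(z)=\sup_u h(u,z)$ and note that the defining family $S=\{\bar z:g(z)>0\ \text{for all }z\le\bar z\}$ is downward closed with $\sup S=c^*$, whence $g(z)>0$ for every $z<c^*$. If one had $g(c^*)>0$, some $u^*$ would satisfy $h(u^*,c^*)>0$, and continuity of $h$ in $z$ would give $h(u^*,z)>0$, hence $g(z)>0$, on a two-sided neighborhood of $c^*$; combined with $g>0$ below $c^*$ this would place some $\bar z>c^*$ in $S$, contradicting $\sup S=c^*$. Thus $g(c^*)\le0$, $D$ is positively invariant, and therefore $\overline{{\cal O}_2^+(w)}\subset D$ for every $w\in D$; in particular no point with $z>c^*$ is approximately reachable from $D$, so $D$ is both invariant and maximal, i.e. an invariant control set.

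It remains to establish uniqueness of the invariant control set. Let $C'$ be any invariant control set; such a set is closed. Choosing $w\in C'$, moving $u$ by Claim 1 and then lowering $z$ by Claim 2, I reach points with arbitrarily negative $z$-coordinate, all of which lie in $\overline{{\cal O}_2^+(w)}\subset C'$; in particular $C'$ contains some $w_0$ with $z$-coordinate below $c^*$. Lemma \ref{lm3.1} applied at $w_0$ together with invariance of $C'$ then gives $D\subset\overline{{\cal O}_2^+(w_0)}\subset C'$. Conversely $C'$ can contain no point with $z>c^*$, for by the control-set property such a point would have to be approximately reachable from the points of $D\subset C'$, contradicting the positive invariance of $D$. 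Hence $C'=D$, which completes the argument. I expect the barrier inequality $\sup_u h(u,c^*)\le0$ to be the main obstacle: it is the heart of the positive-invariance proof, and some care is needed because the supremum is taken over the noncompact $u$-line, so the interplay between this supremum and the pointwise continuity of $h$ must be handled precisely; the systematic passage between the open reachable sets and their closures is the secondary point requiring attention.
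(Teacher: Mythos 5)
Your proposal is correct and follows essentially the same route as the paper: Claims 1--3 give full approximate controllability (hence $C=\R^2$) when $0<\beta<\alpha$, and for $\beta<0$ or $\beta\geq\alpha$ you combine Lemma \ref{lm3.1} with the observation that $h(u,c^*)\leq0$ for all $u$ (which the paper asserts directly from the definition of $c^*$ and you justify in more detail) to get that $\{z\le c^*\}$ is positively invariant and is the unique invariant control set. The only difference is expository: you spell out the barrier inequality and the maximality/uniqueness bookkeeping more explicitly than the paper does.
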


\begin{proof}
If $0<\beta<\alpha$, it follows from Claims 1, 2, and 3 that for any $(u_1, z_1), (u_2, z_2)\in\R^2$, $(u_2, z_2)\in \bar{{\cal O}^+_2(u_1, z_1)}.$ Hence, for any $(u_1, v_1), (u_2, v_2)\in\R^2$, we have $(u_2, v_2)\in \bar{{\cal O}^+_1(u_1, v_1)}.$ This implies that $\R^2$ is an unique invariant control set.
Now, consider the case $\beta<0$ or $\beta\geq\alpha$ for which the conclusion of this proposition is a direct corollary of Lemma \ref{lm3.1} if $c^*=\infty$.
If $c^*<\infty$,  it is seen from the definition of $c^*$ that $h(u, c^*)\leq0\,\forall u\in\R$. Consequently, for all control $\phi$, we have $z_\phi(t, u, z)\leq c^*\,\forall t\geq0$ provided that $z\leq c^*$. In other words, $ \bar{{\cal O}^+_2(u, z)}\subset \{(u', z'): z'\leq c^*\}$.
This claim combined with Lemma \ref{lm3.1} implies that
$\bar{{\cal O}^+_2(u, z)}=\{(u', z'): z'\leq c^*\}$ for all $u\in\R, z\leq c^*$. As a result, $\{(u', z'): z'\leq c^*\}$ is a invariant control set for \eqref{e3.3}.
The uniqueness of this invariant control set is obtained.
in the property that $\{(u', z'): z'\leq c^*\}\subset\bar{{\cal O}^+_2(u, z)}$ for every $(u, z)\in\R^2$.
Equivalently, $C:=	\{(u, v): v-\dfrac\beta\alpha u\leq c^*\}$ is a unique invariant control set for \eqref{e3.2}.
\end{proof}

Note that if $\lambda>0$, there is an invariant probability measure $\pi^*$ of \eqref{e3.1} that is associated with $\mu^*$ of \eqref{e3.0}.
Since there is only one invariant control set $C$, it follows from Assumption \ref{asp3.1} that $\pi^*$ is the unique invariant probability measure with support $C$. Moreover, for all $(u,v)\in C$ and a $\pi^*$-integrable function $f$ we have
\begin{equation}\label{slln}
\PP\Big\{\lim\limits_{t\to\infty}\dfrac1t\int_0^tf\big(\xi^{u,v}(s), \eta^{u,v}(s)\big)ds=\int_{\R^2}f(u',v')\pi^*(du', dv')\Big\}=1.
\end{equation}
These results are proved in \cite{WK}.
Moreover, it follows from \cite[Proposition 5.1]{IK}
\begin{equation}\label{etv}
\lim\limits_{t\to\infty}\|\hat P(t, (u,v), \cdot)-\pi^*(\cdot)\|\to0\,\forall(u,v)\in C,
\end{equation} where $\|\cdot\|$ is the total variation norm, if we can verify the following H\"ormander condition.

\begin{asp}\label{asp3.2}
The ideal $\L_0$ in $\L$ generated by $B$ satisfies dim$\L_0(u, v)=2$ at every $(u, v)\in C$. In other words, the set of vectors $B, [A, B], [B, [A, B]], [B, [B, A, B]],\dots$ spans $\R^2$.
\end{asp}

We  aim to prove that \eqref{slln} (under Assumption \ref{asp3.1}) and \eqref{etv} (under Assumption \ref{asp3.2}) hold for all $(u,v)\in\R^2$.
We
need only consider the case $\beta<0$ or $\beta\geq\alpha$ since  $C=\R^2$ in case $0<\beta<\alpha$.

\begin{prop}\label{prop3.2}
Suppose that ${\beta}\geq\alpha, \lambda>0$. Then, for each initial value $(u,v)\in\R^2$, we have $\tau^{u,v}_{C^\circ}$ almost surely with $\tau^{u,v}_{C^\circ}=\inf\{t>0: (\xi^{u,v}(t),\eta^{u,v}(t))\in {C^\circ}\}$.
\end{prop}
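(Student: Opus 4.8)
The plan is to pass to the coordinates $(\xi,\eta)$ of \eqref{e3.1} and to track the single scalar $z(t):=\eta^{u,v}(t)-\tfrac\beta\alpha\,\xi^{u,v}(t)$. The decisive structural feature of the degenerate case is that the diffusion vector is $(\alpha,\beta)^\top$, so the Brownian increments cancel in $dz$: one checks directly that, pathwise, $z(t)$ obeys the \emph{ordinary} differential equation $\dot z(t)=h(\xi^{u,v}(t),z(t))$ with the same $h$ appearing after \eqref{e3.3}, the randomness entering only through the driving path $\xi^{u,v}(\cdot)$. Since $C^\circ=\{(u',v'):v'-\tfrac\beta\alpha u'<c^*\}$, we have $\tau^{u,v}_{C^\circ}=\inf\{t>0:z(t)<c^*\}$. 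If $c^*=\infty$ then $C=\R^2$ and there is nothing to prove; if $z(0)<c^*$ then $\tau^{u,v}_{C^\circ}=0$. So I assume $c^*<\infty$ and $z(0)=:z_0\ge c^*$, and the goal becomes to show that this pathwise-deterministic $z$ is driven strictly below $c^*$ in finite time, almost surely.

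Next I would record two deterministic facts about $h$ valid for $\beta\ge\alpha>0$. First, a uniform push-down near $-\infty$: every $z$-dependent term of $h$ carries a factor $e^{u}$ or $e^{\frac\beta\alpha u}$, and collecting them shows that for $u\le -M$ one has $h(u,z)\le -\kappa+\tfrac\beta\alpha b_1e^{-M}+\tfrac{c_2}{m_1}e^{-M}+\Gamma_0$, where $\kappa:=a_2+\tfrac{\beta^2}2+\tfrac\beta\alpha(a_1-\tfrac{\alpha^2}2)>0$ and $\Gamma_0:=\sup_{r>0}\big(-b_2r+\tfrac{(\beta/\alpha)c_1r}{m_1+m_3r}\big)$; hence, on the range of $z$ that the trajectory will occupy, there are $M,\delta>0$ with $h(u,z)\le-\delta$ for all $u\le-M$. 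Second, an upper barrier: analysing $h(u,z)$ as $z\to\infty$ along $u=-\tfrac\alpha\beta z+s$ shows $\sup_u h(u,z)\to-\kappa+\Gamma_0$, so that $c^*<\infty$ produces a level $Z^*\ge c^*$ with $\sup_u h(u,z)\le0$ for $z\ge Z^*$; because $h$ then cannot push $z$ upward across $Z^*$, one gets the pathwise bound $z(t)\le\bar Z:=\max(z_0,Z^*)$ for all $t$. Together these confine $z$ to the compact interval $[c^*,\bar Z]$ until it first drops below $c^*$.

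With these in hand the proof is a recurrence argument for the prey log-process $\xi$, and this is exactly where the hypothesis $\beta\ge\alpha$ (as opposed to $\beta<0$) is used: the region forcing $z$ downward is $\{\xi\le-M\}$, and $\xi$ is recurrent to $-\infty$. Indeed $\xi$ carries the nondegenerate noise $\alpha\,dW$, its drift is bounded above and stays bounded as $\xi\to-\infty$ (there it tends to $a_1-\tfrac{\alpha^2}2>0$), so $\xi$ is comparable to a Brownian motion with bounded drift. Consequently, almost surely $\xi$ enters $(-\infty,-M]$ infinitely often and, by the strong Markov property, performs infinitely many excursions that remain below $-M$ for a duration at least $T_0:=(\bar Z-c^*)/\delta$. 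Along any such excursion, so long as $z(t)\ge c^*$ we have $z(t)\in[c^*,\bar Z]$ and hence $\dot z(t)\le-\delta$, so $z$ falls by at least $\delta T_0=\bar Z-c^*\ge z_0-c^*$; this forces $z$ strictly below $c^*$, i.e.\ $\tau^{u,v}_{C^\circ}<\infty$ almost surely.

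The two steps that will require genuine work are the barrier claim and the excursion claim. Verifying that $c^*<\infty$ entails $\sup_u h(u,z)\le0$ for all large $z$ is the computation sketched above; I expect it to be routine but slightly delicate, since $\sup_u h(u,z)$ need not be monotone in $z$, and one must be sure no interior dip creates a finite $c^*$ without an accompanying ceiling. The genuine obstacle, however, is the quantitative recurrence of $\xi$ together with the control of $z$ from above, because $\xi$ and $z$ are coupled through $\eta=z+\tfrac\beta\alpha\xi$: I must produce arbitrarily long deep excursions of $\xi$ while simultaneously preventing $z$ from escaping to $+\infty$ between excursions. I would handle the recurrence by dominating $\xi$ from above and below by one-dimensional diffusions with bounded drift over the relevant excursion, so that the excursion probabilities are positive and infinitely many independent attempts (via the strong Markov property) yield the almost-sure conclusion, with the barrier of the previous paragraph supplying the needed upper control on $z$.
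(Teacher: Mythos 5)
Your reduction to the scalar quantity $z(t)=\eta^{u,v}(t)-\tfrac\beta\alpha\xi^{u,v}(t)$ is a genuinely different route from the paper's, and the observation that the noise cancels so that $\dot z=h(\xi^{u,v}(t),z(t))$ pathwise is correct and is exactly the structural feature behind the paper's control system \eqref{e3.3}. However, the argument has a genuine gap at the step you yourself flag as ``slightly delicate'': the upper barrier. The definition $c^*=\sup\{\bar z:\sup_u h(u,z)>0\ \forall z\le\bar z\}$ only controls $h$ \emph{below} $c^*$; finiteness of $c^*$ yields (by continuity) $h(u,c^*)\le0$ for all $u$, which is what Proposition \ref{prop3.1} uses to keep trajectories that start below $c^*$ from crossing upward, but it says nothing about $\sup_u h(u,z)$ for $z>c^*$. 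An interior dip of $\sup_u h(u,\cdot)$ at one level can make $c^*$ finite while $\sup_u h(u,z)>0$ persists for large $z$ (indeed your own computation gives $\sup_u h(u,z)\to-\kappa+\Gamma_0$ as $z\to\infty$, and nothing in the hypotheses forces $\Gamma_0<\kappa$ once $c^*<\infty$). Without the ceiling $Z^*$ the whole mechanism collapses in a circle: the pathwise bound $z(t)\le\bar Z$ is what confines $z$ to a compact interval, the compact interval is what makes $e^{z+\frac\beta\alpha u}$ uniformly small for $u\le -M$ and hence gives the uniform push-down $h\le-\delta$ (your displayed bound only gives $h\le-\kappa+\Gamma_0+o(1)$ without it), and the finite excursion length $T_0=(\bar Z-c^*)/\delta$ is what makes the infinitely-many-trials argument close. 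Note also that $z(\cdot)$ evaluated at the start of a deep excursion of $\xi$ equals $\eta+\tfrac\beta\alpha M$ up to constants, so absent the barrier you have no uniform starting level and no fixed $T_0$.

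For comparison, the paper avoids any global bound on $z$ by working with the pair $(\xi,\eta)$ directly: it uses the occupation estimate \eqref{e2.18} to get recurrence to $\hat A$, then shows (Lemma \ref{lm3.2}, via the starvation of the predator when $\xi$ is very negative, plus the support theorem on the compact piece) that from $\hat A$ the process reaches $E=\{u,v\le d_4\}$ with uniformly positive probability, and finally (Lemma \ref{lm3.4}) that from anywhere in the \emph{unbounded} set $E$ the process exits $D$ through $\{\xi=\tfrac\alpha\beta(d_3-c^*),\,\eta<d_3\}\subset C^\circ$ with probability at least $\tilde p_1$ --- uniformity being obtained by letting the time horizon $\hat T_{u,v}$ grow with $-u$ while the exponential martingale inequality gives a bound independent of the horizon. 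If you want to salvage your route, you must either prove the ceiling (e.g.\ by showing that $\Gamma_0>\kappa$ already forces $c^*=\infty$, and treating the borderline case), or replace the pathwise confinement of $z$ by a uniform-in-starting-point probabilistic estimate of the type the paper establishes on $E$.
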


The proof of this proposition is divided into several lemmas.
We consider only the case $c^*<\infty$ since the assertion is trivial if $c^*=\infty$.
Let us first explain the idea of the proof. Denote $d_1=\ln H, d_2=\ln\hbar$, where $\hbar, H$ are defined as in the proof of Theorem \ref{thm2.2}.
Since the process is recurrent relative to $\hat A:=\{(u,v): u\leq d_1, d_2\leq v\leq d_1\}$, in order to show $\tau^{u,v}_{C^\circ}<\infty$, we need to estimate
(uniformly)
the probability of entering $C^\circ$ from $\hat A$.
The difficulty is that $\hat A$ is not compact. Therefore, we divide $\hat A$ into $\hat A_1=\{(u,v): u< d_5, d_2\leq v\leq d_1\}$ and $\hat A_2=\hat A\setminus\hat  A_1$, where $-d_5$ is sufficiently large.
Noting that
$\hat A_2$ is compact and using the support theorem and the Feller property, we can
obtain a positive lower bound for the probability of entering $C$ from $\hat A_2$.
To obtain similar result for $\hat A_1$, we will analyze the property of the drift when $-u$ is sufficiently large and then estimate using the exponential martingale inequality.

Fix $0<\delta<\min\{a_1-{\alpha^2}/2, a_2+{\beta^2}/2\}$. Thus, there is  a $d_3<d_2$ such that
for all $u\leq\alpha\beta^{-1}(d_3-c^*), v\leq d_3$, we have
\begin{equation}\label{e3.3a}
a_1-\dfrac{\alpha^2}2-b_1e^{u}-\dfrac{c_1e^{v}}{m_1+
m_2e^{u}+m_3e^{v}}\geq\delta\mbox{ and }
-a_2-\dfrac{\beta^2}2-b_2e^{v}+\dfrac{c_2e^{u}}{m_1+m_2e^{u}+m_3e^{v}}\leq-\delta.
\end{equation}
Let $d_4\leq\min\{\dfrac\alpha\beta(d_3-c^*), d_3\}-\ell$ where $\ell>0$ be chosen such that $2\exp(-\dfrac{\delta\ell}{(\alpha+\beta)^2})<1$.
Construct open sets $D=\{(u,v)\in\R^2: u< \dfrac\alpha\beta(d_3-c^*), v< d_3\}$ and $E=\{(u,v)\in\R^2, u,v\leq d_4\}.$ Then
put $E_1=E^\circ\cap C^\circ$, $E_2=E\setminus E_1$.

\begin{lm}\label{lm3.4}
Suppose that $\beta\geq\alpha$. There is a $\tilde p>0$ such that
$$\PP\{\xi^{u,v}(\sigma^{u,v}_{D})=\dfrac\alpha\beta(d_3-c^*), \eta^{u,v}(\sigma^{u,v}_{D})<d_3\}\geq\tilde p_1\,\forall\, (u,v)\in E,$$
where $\sigma^{u,v}_{D}$ is the first time $(\xi^{u,v}(t),\eta^{u,v}(t))$ exits $D$.
\end{lm}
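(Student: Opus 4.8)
The plan is to show that a single event---the one on which the common noise $W$ never makes a large upward excursion---already forces the trajectory to leave $D$ through its right vertical face below the corner, and then to bound the probability of that event from below by a constant that does not depend on the starting point, using the exponential martingale inequality. Write $\sigma=\sigma^{u,v}_{D}$. First I would record the two comparison inequalities valid up to $\sigma$. Since $(\xi^{u,v}(s),\eta^{u,v}(s))\in\bar D$ for $s\le\sigma$ and $\bar D$ is exactly the region where the drift estimates \eqref{e3.3a} hold, integrating \eqref{e3.1} gives, for all $t\le\sigma$,
$$\xi^{u,v}(t)\ge u+\delta t+\alpha W(t)\quad\text{and}\quad \eta^{u,v}(t)\le v-\delta t+\beta W(t).$$
Because $(u,v)\in E$ we have $u,v\le d_4\le\min\{\tfrac{\alpha}{\beta}(d_3-c^*),d_3\}-\ell$, so the initial point sits a definite distance $\ell$ below both thresholds, and in particular $E\subset D$.

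Next I would introduce the good event $G:=\{\sup_{t\ge0}(\beta W(t)-\delta t)<\ell\}$ and check that it is contained in the event in the statement. On $G$ the second inequality yields $\eta^{u,v}(t)\le v+\sup_{s\ge0}(\beta W(s)-\delta s)<d_4+\ell\le d_3$ for every $t\le\sigma$, so the trajectory never reaches the top face $\{v=d_3\}$; hence the exit, when it happens, must occur on the right face $\{u=\tfrac{\alpha}{\beta}(d_3-c^*)\}$ with $\eta^{u,v}(\sigma)<d_3$. That the exit does occur in finite time follows from the first inequality: while the process remains in $D$ one has $\xi^{u,v}(t)\ge u+\delta t+\alpha W(t)\to\infty$ a.s. (since $\alpha W(t)/t\to0$), so $\xi^{u,v}$ must attain the level $\tfrac{\alpha}{\beta}(d_3-c^*)$, i.e. $\sigma<\infty$ a.s. Therefore
$$G\subset\Big\{\xi^{u,v}(\sigma)=\tfrac{\alpha}{\beta}(d_3-c^*),\ \eta^{u,v}(\sigma)<d_3\Big\}.$$

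Finally I would estimate $\PP(G)$ from below uniformly in $(u,v)$. Applying the exponential martingale inequality to $\beta W(t)-\delta t$ gives $\PP\{\sup_{t\ge0}(\beta W(t)-\delta t)\ge\ell\}\le\exp(-2\delta\ell\beta^{-2})\le\exp(-\delta\ell(\alpha+\beta)^{-2})$, where the last step uses $\beta\le\alpha+\beta$. Hence $\PP(G)\ge 1-\exp(-\delta\ell(\alpha+\beta)^{-2})$, which by the choice of $\ell$ (namely $2\exp(-\delta\ell(\alpha+\beta)^{-2})<1$) is a strictly positive constant $\tilde p_1$ independent of the initial point, giving the claimed uniform bound. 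The one point that needs care is precisely this uniformity over the non-compact set $E$; it holds automatically because all the constants entering the argument ($\delta$, $\ell$, and the drift bounds on $\bar D$) are independent of $(u,v)$, so the estimate is governed solely by the law of $W$. The essential work is therefore the exponential-martingale control of the upward excursion of the shared noise against the strictly negative drift of $\eta$ on $\bar D$, the negative drift being what makes that excursion exponentially unlikely.
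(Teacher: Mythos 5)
Your proof is correct and follows essentially the same route as the paper: the drift bounds \eqref{e3.3a} on $D$ give the two comparison inequalities, and an exponential martingale estimate that is uniform in the starting point controls the shared noise. The only (harmless) difference is that you bound the one-sided supremum $\sup_{t\ge0}\{\beta W(t)-\delta t\}$ over an infinite horizon and invoke $W(t)/t\to0$ to get $\sigma^{u,v}_D<\infty$ a.s., whereas the paper bounds $|W(t)|$ two-sidedly over the finite horizon $\hat T_{u,v}=\tfrac2\delta\big(\tfrac\alpha\beta(d_3-c^*)-u+\ell\big)$ and deduces $\sigma^{u,v}_D\le\hat T_{u,v}$ directly; both yield a lower bound of at least $\tilde p_1=1-2\exp\big(-\delta\ell/(\alpha+\beta)^2\big)$.
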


\begin{proof}
Define $\hat T_{u,v}=\dfrac2\delta\big(\dfrac\alpha\beta(d_3-c^*)-u+\ell\big)$.
By the well-known exponential martingale inequality, we have $\PP(\Omega_1)>\tilde p_1:=1-2\exp(-\dfrac{\delta\ell}{(\alpha+\beta)^2})$,
where $$\Omega_1:=\Big\{\omega:\sup_{0\leq t\leq\hat T_{u,v}}\big\{|W(t)|-\dfrac{\delta}{2(\alpha+\beta)}t\big\}<\dfrac\ell{\alpha+\beta}\Big\}.$$
For $\omega\in\Omega_1$ and $u, v\leq d_4$, it follows from the property of $\Omega_1$ and \eqref{e3.1} that
\begin{equation}\label{e3.3b}
\begin{aligned}
\xi^{u,v}(\sigma_D^{u,v}\wedge\hat T_{u,v})\geq& u+\delta(\sigma_D^{u,v}\wedge\hat T_{u,v})-\dfrac{\alpha\delta}{2(\alpha+\beta)}(\sigma_D^{u,v}\wedge\hat T_{u,v})-\dfrac{\alpha\ell}{\alpha+\beta}\\
\geq&u-\ell+\dfrac{\delta}2(\sigma_D^{u,v}\wedge\hat T_{u,v}),
\end{aligned}
\end{equation}
and that
\begin{equation}\label{e3.3c}
\begin{aligned}
\eta^{u,v}(\sigma_D^{u,v}\wedge\hat T_{u,v})\leq d_4-\delta(\sigma_D^{u,v}\wedge\hat T_{u,v})+\dfrac{\beta\delta}{2(\alpha+\beta)}(\sigma_D^{u,v}\wedge\hat T_{u,v})+\ell< d_3.
\end{aligned}
\end{equation}
If $\sigma_D^{u,v}> T_{u,v}$, it follows from \eqref{e3.3b} that
$\xi^{u,v}(\hat T_{u,v})\geq u-\ell +\dfrac{\delta}2\hat T_{u,v}\geq \dfrac{\alpha}\beta(d_3-c^*)$ which is a contradiction.
Hence $\sigma_D^{u,v}\leq T_{u,v}$ for all $\omega\in\Omega_1$.
Furthermore, \eqref{e3.3c} implies that for $\omega\in\Omega_1$, $\eta^{u,v}(\sigma_D^{u,v})=\eta^{u,v}(\sigma_D^{u,v}\wedge\hat T_{u,v})<d_3$ and consequently $\xi^{u,v}(\sigma_D^{u,v})=\dfrac{\alpha}\beta(d_3-c^*)$.
As a result,
$$\PP\big\{\xi^{u,v}(\sigma^{u,v}_{D})=\dfrac\alpha\beta(d_3-c^*), \eta^{u,v}(\sigma_D^{u,v})<d_3\big\}\ \geq\PP(\Omega_1)\geq\tilde p_1\,\forall (u,v)\in E.$$
The lemma is proved. \end{proof}

\begin{lm}\label{lm3.2}
Suppose that $\beta\geq\alpha$. There are $d_5\in\R$, $\tilde p_2>0$ and $\bar T>0$ such that
$$\PP\{\tau^{u,v}_{E}\leq \bar T\}\geq\tilde p_2\,\forall u\leq d_5, d_2\leq v\leq d_1,$$ where
$\tau^{u,v}_E$ is the first time $(\xi^{u,v}(t), \eta^{u,v}(t))$  enters $E$.

\end{lm}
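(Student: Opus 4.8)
The plan is to exploit the fact that when the prey is essentially absent ($-u$ very large) the predator is starved, so its logarithm $\eta$ is pushed downward at a definite rate \emph{uniformly in its own size}, while $\xi$ cannot climb too quickly; a single Brownian event, independent of the starting point, then carries the process into $E$ within a fixed time. First I would record two pointwise drift bounds directly from \eqref{e3.1}. The $\xi$-drift never exceeds $\kappa:=a_1-\frac{\alpha^2}2$, because the subtracted terms $b_1e^{\xi}$ and $\frac{c_1e^{\eta}}{m_1+m_2e^{\xi}+m_3e^{\eta}}$ are nonnegative. For the $\eta$-drift, discarding $-b_2e^{\eta}\le0$ and using $\frac{c_2e^{\xi}}{m_1+m_2e^{\xi}+m_3e^{\eta}}\le\frac{c_2}{m_1}e^{\xi}$ gives the upper bound $-a_2-\frac{\beta^2}2+\frac{c_2}{m_1}e^{\xi}$, which is $\le-\delta$ as soon as $\xi\le d_6:=\ln\frac{m_1(a_2+\beta^2/2-\delta)}{c_2}$. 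The essential point is that this holds for every $v$, since the starvation term dominates uniformly in the predator size; set $d_7:=\min\{d_4,d_6\}$.

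Next I would control the noise on one event that does not depend on $(u,v)$, exactly as in the proof of Lemma~\ref{lm3.4}. Let
\[
\Omega_2:=\Big\{\sup_{t\ge0}\big(|W(t)|-\tfrac{\delta}{2(\alpha+\beta)}t\big)<\tfrac{\ell}{\alpha+\beta}\Big\}.
\]
Applying the exponential martingale inequality to $\pm W$ yields $\PP(\Omega_2)\ge\tilde p_2:=1-2\exp\big(-\frac{\delta\ell}{(\alpha+\beta)^2}\big)>0$, with the same $\ell$ fixed before Lemma~\ref{lm3.4}. On $\Omega_2$ one has $|W(t)|<\frac{\delta}{2(\alpha+\beta)}t+\frac{\ell}{\alpha+\beta}$ for all $t\ge0$, so integrating the two drift bounds gives, for $t\in[0,\bar T]$,
\[
\xi^{u,v}(t)\le u+\Big(\kappa+\tfrac{\alpha\delta}{2(\alpha+\beta)}\Big)t+\tfrac{\alpha\ell}{\alpha+\beta},\qquad
\eta^{u,v}(t)\le v-\delta' t+\tfrac{\beta\ell}{\alpha+\beta},
\]
where $\delta':=\delta\,\frac{2\alpha+\beta}{2(\alpha+\beta)}>0$. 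The $\xi$-estimate is unconditional (it uses only $\xi$-drift $\le\kappa$), whereas the $\eta$-estimate is valid only as long as $\xi$ has stayed $\le d_6$.

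The constants are then fixed in the order $\bar T$, then $d_5$. I would choose $\bar T\ge\big(d_1-d_4+\frac{\beta\ell}{\alpha+\beta}\big)/\delta'$, and afterwards choose $d_5$ so negative that $d_5+\big(\kappa+\frac{\alpha\delta}{2(\alpha+\beta)}\big)\bar T+\frac{\alpha\ell}{\alpha+\beta}\le d_7$. For any $(u,v)$ with $u\le d_5$ and $d_2\le v\le d_1$, the first estimate (monotone in $t$) then forces $\xi^{u,v}(t)\le d_7\le d_6$ throughout $[0,\bar T]$, which retroactively justifies the $\eta$-drift bound on the whole interval; the second estimate at $t=\bar T$ gives $\eta^{u,v}(\bar T)\le d_1-\delta'\bar T+\frac{\beta\ell}{\alpha+\beta}\le d_4$. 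Since also $\xi^{u,v}(\bar T)\le d_7\le d_4$, the point $(\xi^{u,v}(\bar T),\eta^{u,v}(\bar T))$ lies in $E$, so $\tau^{u,v}_E\le\bar T$ on $\Omega_2$, whence $\PP(\tau^{u,v}_E\le\bar T)\ge\PP(\Omega_2)\ge\tilde p_2$ uniformly over the admissible $(u,v)$.

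The main obstacle is the simultaneous control over the long interval $[0,\bar T]$: the horizon $\bar T$ must be taken large enough to drag $v$ (which may start as high as $d_1$) down below $d_4$, yet over such a horizon $\xi$ could in principle rise out of the region $\{\xi\le d_6\}$ where the starvation bound is available. The resolution is that $\kappa$ caps the upward drift of $\xi$ for all states, so the total rise of $\xi$ on $[0,\bar T]$ is bounded by a constant depending only on $\bar T$ and on the noise budget recorded by $\Omega_2$; choosing the starting level $d_5$ below $d_7$ by exactly this amount keeps $\xi$ in the good region for the entire interval. This is precisely why $\hat A_1$ must be treated separately from the compact piece $\hat A_2$ and why $-d_5$ is taken large. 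The one point to verify with care is that the $\eta$-drift bound is genuinely uniform in $v$, since otherwise a separate argument bounding $\eta$ from above before it enters $E$ would be needed.
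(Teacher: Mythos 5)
Your proof is correct and follows essentially the same route as the paper's: both exploit that for $u$ sufficiently negative the $\eta$-drift is bounded above by a strictly negative constant uniformly in $v$, control the noise via the exponential martingale inequality on an event of fixed positive probability, and choose $d_5$ negative enough that $\xi$ cannot escape the starvation region before $\eta$ is dragged below $d_4$. The only (harmless) difference is organizational: you use the global bound $\xi$-drift $\le a_1-\alpha^2/2$ to avoid the paper's stopping time $\zeta^{u,v}$ and its contradiction argument, which slightly streamlines the bookkeeping.
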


\begin{proof}
It is readily seen that there are  $\sigma_1<d_4$, $G_1>0$ and $\delta_1>0$ such that
$$\sup\limits_{u\leq\sigma_1,v\in\R}\{a_1-\dfrac{\alpha^2}2-b_1e^{u}-\dfrac{c_1e^{v}}{m_1+
m_2e^{u}+m_3e^{v}}\}\leq G_1,$$
and that
$$\sup\limits_{u\leq\sigma_1,v\in\R}\{-a_2-\dfrac{\beta^2}2-b_2e^{v}+\dfrac{c_2e^{u}}{m_1+m_2e^{u}+m_3e^{v}}\}<-\delta_1.$$
Fix $\delta_2>0$. Define $\bar T=2\dfrac{d_1-d_4+\delta_2}{\delta_1}$ and $d_5=\sigma_1-\delta_2-(G_1+\dfrac{\delta_1}2)\bar T$ and the stopping time
$$\zeta^{u,v}=\inf\{t>0: \xi^{u,v}(t)\geq\sigma_1\mbox{ or } \eta^{u,v}(t)\leq d_4\}.$$
By the exponential martingale inequality, we have $\PP\{\Omega_2\}>\tilde p_2:=1-\exp(-\dfrac{\delta_1\delta_2}{(\alpha+\beta)^2})>0$
where $$\Omega_2:=\Big\{\omega:\sup_{0\leq t\leq\bar T}\big\{W(t)-\dfrac{\delta_1}{2(\alpha+\beta)}t\big\}<\dfrac{\delta_2}{\alpha+\beta}\Big\}.$$
For $\omega\in\Omega_2$ and $u<d_5, d_2\leq v\leq d_1$, it follows from the property of $\Omega_2$ and \eqref{e3.1} that
\begin{equation}\label{e3.4}
\begin{aligned}
\xi^{u,v}(\zeta^{u,v}\wedge\bar T)<& u+G_1(\zeta^{u,v}\wedge\bar T)+\dfrac{\alpha\delta_1}{2(\alpha+\beta)}(\zeta^{u,v}\wedge\bar T)+\dfrac{\alpha\delta_2}{\alpha+\beta}\\
\leq&d_5+\delta_2+(G_1+\dfrac{\delta_1}2)\bar T=\sigma_1,
\end{aligned}
\end{equation}
and that
\begin{equation}\label{e3.5}
\begin{aligned}
\eta^{u,v}(\zeta^{u,v}\wedge\bar T)<& d_1-\delta_1(\zeta^{u,v}\wedge\bar T)+\dfrac{\beta\delta_1}{2(\alpha+\beta)}(\zeta^{u,v}\wedge\bar T)+\dfrac{\beta\delta_2}{\alpha+\beta}\\
\leq&d_1+\delta_2-\dfrac{\delta_1}2(\zeta^{u,v}\wedge\bar T).
\end{aligned}
\end{equation}

If $\zeta^{u,v}>\bar T$, we deduce from \eqref{e3.5} that
$\eta^{u,v}(\bar T)< d_1+\delta_2-\dfrac{\delta_1}2(\bar T)=d_4,$ which contradicts the definition of $\zeta^{u,v}$.
Hence for $\omega\in\Omega_2$, we have $\zeta^{u,v}\leq\bar T$.
Moreover, \eqref{e3.4} implies that $\xi^{u,v}(\zeta^{u,v})<\sigma_1$. In view of the definition of $\zeta^{u,v}$, we have
$\eta^{u,v}(\zeta^{u,v})=d_4$ in $\Omega_2$, consequently $\tau^{u,v}_E\leq\bar T$ in $\Omega_2$.
As a result, for any $u\leq d_5, d_2\leq v\leq d_1$, $\PP\{\tau_E^{u,v}\leq\bar T\}\geq\PP(\Omega_2)\geq \tilde p_2.
$
\end{proof}

\begin{lm}\label{lm3.3}
Suppose that ${\beta}\geq\alpha, \lambda>0$. For any $(u,v)\in\R^2$, the process $(\xi^{u,v}(t),\eta^{u,v}(t))$ is recurrent relative to $E$,
that is,  there is a sequence of random variables $\{t_n(\omega)\}$ such that $t_n(\omega)\uparrow\infty$ as $n\to\infty$ and that $(\xi^{u,v}(t_n),\eta^{u,v}(t_n))\in E\,\forall n\in\N$ for almost all $\omega$.
\end{lm}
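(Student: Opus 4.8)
The plan is to combine the recurrence of the process relative to the strip $\hat A=\{(u,v):u\le d_1,\ d_2\le v\le d_1\}$ — which is already available from the time-average estimate \eqref{e2.18} (valid here, since the bounds of Section \ref{sec:thr} carry over to the degenerate system) — with a uniform lower bound on the probability of reaching $E$ from $\hat A$ within a fixed time horizon, and then to run an iterated strong-Markov (conditional Borel--Cantelli) argument. The key point is that $\hat A$ and $E$ are disjoint (on $E$ one has $v\le d_4<d_2$), so one genuinely has to transition from the recurrent strip $\hat A$ down into $E$.

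First I would record that \eqref{e2.18} gives $\liminf_{t\to\infty}\frac1t\int_0^t\1_{\{(\xi^{u,v}(s),\eta^{u,v}(s))\in\hat A\}}\,ds>0$ a.s. If the trajectory stayed outside $\hat A$ after some finite time, this time-average would tend to $0$; hence almost surely $(\xi^{u,v}(t),\eta^{u,v}(t))\in\hat A$ at arbitrarily large times. Since $\hat A$ is closed, the stopping times $\beta_1=\inf\{t\ge0:(\xi^{u,v},\eta^{u,v})\in\hat A\}$ and $\beta_{k+1}=\inf\{t\ge\beta_k+\bar T:(\xi^{u,v},\eta^{u,v})\in\hat A\}$ are then all finite a.s., where $\bar T$ is the common time horizon fixed below.

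Next I would establish a uniform hitting bound: there is $p_0>0$ with $\PP\{\tau^{u,v}_E\le\bar T\}\ge p_0$ for every $(u,v)\in\hat A$. Splitting $\hat A=\hat A_1\cup\hat A_2$ with $\hat A_1=\{u<d_5,\ d_2\le v\le d_1\}$ and the compact remainder $\hat A_2=\{d_5\le u\le d_1,\ d_2\le v\le d_1\}$, Lemma \ref{lm3.2} already furnishes the bound $\tilde p_2$ on $\hat A_1$. On $\hat A_2$ I would invoke the support theorem: by Assumption \ref{asp3.1} the system is accessible, and Claim 1 lets one drive $u$ to an arbitrarily negative value while keeping $z=v-\frac\beta\alpha u$ within any prescribed tolerance, so $E^\circ$ is reachable from every point of $\hat A_2$ by an admissible control. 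The support theorem then makes the diffusion enter $E$ with positive probability, and the Feller property together with the compactness of $\hat A_2$ upgrades this pointwise positivity to a single uniform bound $\tilde p_3>0$ and a common reaching time (enlarging $\bar T$ if needed; this does not affect the bound from Lemma \ref{lm3.2}, since reaching $E$ before a larger time is only more likely). Taking $p_0=\min\{\tilde p_2,\tilde p_3\}$ gives the claim.

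Finally I would conclude by the strong Markov property. Writing $A_k$ for the event that $(\xi^{u,v},\eta^{u,v})$ enters $E$ during $[\beta_k,\beta_k+\bar T]$, one has $\PP(A_k^c\mid\F_{\beta_k})\le 1-p_0$ because $(\xi^{u,v}(\beta_k),\eta^{u,v}(\beta_k))\in\hat A$; iterating the conditioning, $\PP\big(\bigcap_{k=k_0}^{N}A_k^c\big)\le(1-p_0)^{N-k_0+1}\to0$ for every $k_0$, so almost surely infinitely many $A_k$ occur. Each realized $A_k$ yields a time in $[\beta_k,\beta_k+\bar T]$ at which the process lies in $E$, and since $\beta_k\to\infty$ these times furnish the desired sequence $t_n\uparrow\infty$ with $(\xi^{u,v}(t_n),\eta^{u,v}(t_n))\in E$. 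I expect the main obstacle to be the compact piece $\hat A_2$: verifying that $E^\circ$ is genuinely reachable from every point there, and that the Feller property together with compactness deliver one positive lower bound $\tilde p_3$ uniform over $\hat A_2$, rather than merely a pointwise-positive probability.
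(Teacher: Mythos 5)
Your proposal is correct and follows essentially the same route as the paper: recurrence relative to $\hat A$ from \eqref{e2.18}, a uniform hitting bound for $E$ obtained by combining Lemma \ref{lm3.2} on the unbounded piece with the support theorem, the Feller property, and a compactness (finite subcover) argument on $\hat A_2$, and then an iterated strong-Markov estimate. The only cosmetic difference is that the paper targets the set $E_1=E^\circ\cap C^\circ$ rather than $E^\circ$ (which it needs later for Proposition \ref{prop3.2}), but for the recurrence statement itself your version suffices.
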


\begin{proof}
Since $E_1\subset \bar{{\cal O}^+_1(u, v)}\,\forall\, (u, v)\in\R^2$, it follows from the support theorem (see \cite[Theorem 8.1, page 518]{IW} or \cite{SV})
for diffusion processes, that there is a $T_{u, v}>0$ such that $\PP\big\{\big(\xi^{u,v}(T_{u,v}), \eta^{u,v}(T_{u,v})\big)\in E_1\big\}>2p^{u,v}>0$.
Since the process $(\xi(t),\eta(t))$ is Feller and $E_1$ is an open set, there is a neighborhood $V_{u,v}$ of $(u,v)$ such that for
$\PP\big\{\big(\xi^{u',v'}(T_{u,v}), \eta^{u',v'}(T_{u,v})\big)\in E_1\big\}>p_{u,v}\,\forall (u',v')\in V_{u,v}.$
Let $d_5$ be as in Lemma \ref{lm3.2},  we consider the compact set $K=\{(u,v): d_5\leq u\leq d_1, d_2\leq v\leq d_1\}$. By the Heine-Borel theorem, there is a finite number of $V_{u_i, v_i}, i=1,
\dots,n$ such that $K\subset\cup_{i=1}^nV_{u_i,v_i}$. Letting $\bar T_K=\max\{T_{u_i,v_i}, i=1,n\},$ $\bar p_K=\min\{p_{u_i,v_i}, i=1,n\}$. we claim that for any $(u,v)\in K$, $\PP\{\tau^{u,v}_{E}\leq T_K\}\geq\PP\{\tau^{u,v}_{E_1}\leq T_K\}\geq p_K>0.$
Combining this result with the conclusion of Lemma \ref{lm3.2}, we derive that there are  $\hat T>0$, $\hat p>0$ such that
\begin{equation}\label{e3.6}
\PP(\tau^{u,v}_E<\hat T)\geq\hat p\,\forall\, (u,v)\in \hat A:=\{(u,v): u\leq d_1, d_2\leq v\leq d_1\}.
\end{equation}
Since \eqref{e2.18} is equivalent to
$$\dfrac1t\int_0^t\1_{\{(\xi^{u,v}(s),\eta^{u,v}(s))\in\hat A\}}ds>0 \ \hbox{ a.s.,} \,\forall\, (u,v)\in\R^2,$$
the process $(\xi^{u,v}(t),\eta^{u,v}(t))$ is recurrent relative to $\hat A$.
Using this property, the strong Markov property and \eqref{e3.6}, we can conclude the recurrence relative to $E$ of $(\xi^{u,v}(t),\eta^{u,v}(t))$.
\end{proof}

\begin{proof}[Proof of Proposition \ref{prop3.2}]
Since $(\xi^{u,v}(t),\eta^{u,v}(t))$ is recurrent relative to $\hat A$ and $E$, we can
define the
following sequences of stopping times
\begin{align*}
\varsigma_1=&\inf\{t>0: \xi^{u,v}(t),\eta^{u,v}(t))\in E\},\\
\upsilon_n=&\inf\{t>\varsigma_n: \xi^{u,v}(t),\eta^{u,v}(t))\in \hat A\},\\
\varsigma_{n+1}=&\inf\{t>\upsilon_n: \xi^{u,v}(t),\eta^{u,v}(t))\in E\},
\end{align*}
which
 are finite almost surely.

We also define $\iota_n=\inf\{t>\varsigma_n: \xi^{u,v}(t),\eta^{u,v}(t))\notin D\}$.
Since $E\subsetneq D\subsetneq \hat A^c$, it is easy to see that $\varsigma_n<\iota_n<\upsilon_n$.
Consider a sequence of events $O_n:=\{\xi^{u,v}(\iota_n)=\dfrac\alpha\beta(d_3-c^*), \eta^{u,v}(\iota_n)<d_3\}.$
If we are in the time $\varsigma_n$, $O_n$ is the future information while we have already known whether $O_{n-1}$ has happened.
Moreover, it follows from Lemma \ref{lm3.4} that $\PP\big(O_n^c|\xi^{u,v}(\varsigma_n)=u',\eta^{u,v}(\varsigma_n)=v'\big)\leq1-\tilde p_1\,\forall (u',v')\in E$.
Hence, using the strong Markovian property of $(\xi^{u,v}(t), \eta^{u,v}(t))$, we can prove that
$$\PP\Big(\bigcap_{k=1}^nO^c_k\Big)\leq(1-\tilde p_1)^n\to0\mbox{ as } n\to\infty.$$
This means that almost surely, $O_n$ must occur for some $n=n(\omega)$.
Whenever $O_n$ occurs, we have $(\xi^{u,v}(\iota_n),\eta^{u,v}(\iota_n))\in C^\circ$. The proof is complete.
\end{proof}

For the case $\beta<0$, we have a similar result.

\begin{prop}\label{prop3.3}
Suppose $\beta<0$, $\lambda>0$. Then, for each initial data $(u, v)\in\R^2$,  $\tau^{u,v}_{C^\circ}<\infty$ almost surely.
\end{prop}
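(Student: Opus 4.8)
The plan is to mirror the proof of Proposition \ref{prop3.2}, but to exploit a geometric simplification that is special to the case $\beta<0$. Writing $z=v-\dfrac{\beta}{\alpha}u$ as in \eqref{e3.3}, observe that when $\beta<0$ we have $-\dfrac{\beta}{\alpha}>0$, so on the set $E=\{(u,v):u,v\le d_4\}$ one has $z=v-\dfrac{\beta}{\alpha}u\le d_4\bigl(1-\dfrac{\beta}{\alpha}\bigr)$, which tends to $-\infty$ as $d_4\to-\infty$. Hence, after possibly decreasing $d_4$, we may assume $E\subset C^{\circ}=\{(u,v):v-\dfrac{\beta}{\alpha}u<c^{*}\}$, so that $E_1=E^{\circ}\cap C^{\circ}=E^{\circ}$. (If $c^{*}=\infty$ then $C=\R^2$ and there is nothing to prove, so we take $c^{*}<\infty$.) Because $E\subset C^{\circ}$, it suffices to show $\tau^{u,v}_{E}<\infty$ a.s., since then $\tau^{u,v}_{C^{\circ}}\le\tau^{u,v}_{E}<\infty$. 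In particular the extra cycling through the region $D$ used in Proposition \ref{prop3.2} (together with Lemma \ref{lm3.4}) is not needed here; the proof is correspondingly shorter.

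To establish $\tau^{u,v}_{E}<\infty$ I would prove recurrence relative to $E$ along the lines of Lemmas \ref{lm3.2} and \ref{lm3.3}. The estimate \eqref{e2.18} again yields recurrence relative to the strip $\hat A=\{(u,v):u\le d_1,\ d_2\le v\le d_1\}$, so it is enough to bound from below, uniformly over $\hat A$, the probability of entering $E$ within a fixed time. On the compact part $K=\{d_5\le u\le d_1,\ d_2\le v\le d_1\}$ this follows from the support theorem together with the Feller property, exactly as in Lemma \ref{lm3.3}; here one uses that $E^{\circ}\subset\bar{{\cal O}^+_1(u,v)}$ for every initial point, which is guaranteed by Lemma \ref{lm3.1} and the accessibility coming from Assumption \ref{asp3.1}. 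On the non-compact tail $\{u\le d_5\}$ one argues as in Lemma \ref{lm3.2}: for $-u$ large the $\eta$-drift in \eqref{e3.1} tends to $-a_2-\dfrac{\beta^2}2-b_2e^{v}\le-a_2-\dfrac{\beta^2}2<0$ (since $e^{u}\to0$), so $\eta$ is driven down to the level $d_4$ in a bounded time, while the $\xi$-drift stays bounded above by a constant $G_1$, so that starting from $u\le d_5$ with $d_5$ sufficiently negative keeps $\xi$ below $d_4$ throughout. Combining the two parts by the strong Markov property gives a uniform lower bound $\PP(\tau^{u,v}_E<\hat T)\ge\hat p>0$ over $\hat A$, and recurrence relative to $\hat A$ then forces $\tau^{u,v}_E<\infty$ a.s.\ by the usual geometric-trials argument.

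The step I expect to require the most care is this tail estimate (the analog of Lemma \ref{lm3.2}), because the sign of $\beta$ changes the way the single Brownian motion $W$ enters the two coordinates. Since the noise coefficients are $\alpha>0$ in $\xi$ and $\beta<0$ in $\eta$, keeping $\xi$ below $d_4$ requires $W$ to be bounded \emph{above}, whereas keeping $\eta$ on its downward course requires $W$ to be bounded \emph{below}; thus, in contrast to Lemma \ref{lm3.2}, a one-sided exponential martingale inequality no longer suffices, and I would instead control $|W(t)|$ on both sides, replacing the normalizing factor $\alpha+\beta$ by $\alpha-\beta=\alpha+|\beta|>0$. Concretely, on the event
\[
\Omega_2=\Bigl\{\omega:\sup_{0\le t\le\bar T}\bigl\{|W(t)|-\dfrac{\delta_1}{2(\alpha-\beta)}t\bigr\}<\dfrac{\delta_2}{\alpha-\beta}\Bigr\},
\]
whose probability is at least $1-2\exp\bigl(-\dfrac{\delta_1\delta_2}{(\alpha-\beta)^2}\bigr)$ and can be made positive by choosing $\delta_2$ large, the same drift-versus-noise comparison as in \eqref{e3.4}--\eqref{e3.5} shows that $\eta$ reaches $d_4$ before $\xi$ can climb above $d_4$, so that $(\xi^{u,v},\eta^{u,v})$ enters $E$. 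Once this two-sided bookkeeping is in place, the remaining steps are routine repetitions of the estimates already carried out, and the conclusion $\tau^{u,v}_{C^{\circ}}<\infty$ follows.
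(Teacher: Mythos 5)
Your proof is correct, and its skeleton is the one the paper uses: recurrence relative to the strip $\hat A=\{u\le d_1,\ d_2\le v\le d_1\}$ coming from \eqref{e2.18}, a uniform-in-$\hat A$ lower bound on the probability of reaching the target set within a fixed time, and then the strong Markov property with a geometric-trials argument. Where you diverge is in how the non-compactness of $\hat A$ is disposed of. You shrink $d_4$ so that $E=\{u,v\le d_4\}\subset C^{\circ}$ (correctly using $-\beta/\alpha>0$) and then rebuild the full entry machinery for $E$: the support-theorem/Feller/Heine--Borel argument on the compact part $K$, plus a fresh analog of Lemma \ref{lm3.2} on the tail $\{u\le d_5\}$, where you rightly note that the sign change of $\beta$ forces a two-sided exponential martingale bound on $W$ with normalizing factor $\alpha-\beta$ in place of $\alpha+\beta$. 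The paper instead exploits the same inequality $-\beta/\alpha>0$ one step earlier: writing $C=\{v\le c^*-ru\}$ with $r=-\beta/\alpha>0$, the set $\hat A_2=\hat A\setminus C^{\circ}$ satisfies $ru\ge c^*-v\ge c^*-d_1$, so $u$ is bounded below there and $\hat A_2$ is compact, while $\hat A_1=\hat A\cap C^{\circ}$ needs nothing. Hence the support-theorem argument of Lemma \ref{lm3.3} applied on the compact piece $\hat A_2$ already yields the uniform bound for entering $C^{\circ}$, and no tail drift estimate (and no analog of Lemma \ref{lm3.4} or of the cycling through $D$) is required. Both routes are valid; yours costs an extra drift-versus-noise computation, the paper's is shorter because the tail of $\hat A$ is already inside $C^{\circ}$.
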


\begin{proof}
We  only consider the case $c^*<\infty$ for which $C=\{(u,v): v\leq c^*-ru\}$ with $r=-\frac\beta\alpha>0$.
Let $\hat A$ be as in the proof of Lemma \ref{lm3.3}.
Divide $\hat A$ into $\hat A_1$ and $\hat A_2$ defined by
$\hat A_1=\hat A\cap C^\circ\,\mbox{ and } \hat A_2=\hat A\setminus \hat A_1.$
It is easy to see that $\hat A_2$ is compact.
Using the same arguments as in the proof of Lemma \ref{lm3.3}, we can find $\bar T_{\hat A_2}>0$ such that
$\inf_{(u',v')\in \hat A_2}\PP(\tau^{u',v'}_{C^\circ}<\bar T_{\hat A_2})>0.$
Since $\hat A_1\subset C^\circ$, we have
$$\inf_{(u',v')\in \hat A}\PP(\tau^{u',v'}_{C^\circ}<\bar T_{\hat A_2})=\inf_{(u',v')\in \hat A_2}\PP(\tau^{u',v'}_{C^\circ}<\bar T_{A_2})>0.$$
Moreover, since $(\xi^{u,v}(t),\eta^{u,v}(t))$ is recurrent relative to $\hat A$,
we can use the strong Markov property to obtain the desired conclusion.
\end{proof}

We complete this section by presenting the following theorem.

\begin{thm}\label{thm3.1}
Suppose $\alpha,\beta\ne0$, $\lambda>0$, and Assumption \ref{asp3.1} holds. Then, \eqref{e3.1} has a unique
invariant probability measure $\pi^*$ satisfying that for any $\pi^*$-integrable function $f$,
\begin{equation}\label{slln1}
\PP\Big\{\lim\limits_{t\to\infty}\dfrac1t\int_0^tf\big(\xi^{u,v}(s), \eta^{u,v}(s)\big)ds=\int_{\R^2}f(u',v')\pi^*(du', dv')\Big\}=1 \  \ \forall (u,v)\in\R^2.
\end{equation}
Moreover, if Assumption \ref{asp3.2} is satisfied, the transition probability $\hat P(t, (u,v),\cdot)$ converges to $\pi^*(\cdot)$ in total variation as $t\to\infty$.
\end{thm}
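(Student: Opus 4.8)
The plan is to reduce everything to the ergodic statements already available on the invariant control set $C$ and then propagate them to an arbitrary starting point by restarting the process the first time it enters $C^\circ$. By Proposition \ref{prop3.1}, if $0<\beta<\alpha$ then $C=\R^2$, and in that case \eqref{slln1} and the asserted total variation convergence are literally \eqref{slln} and \eqref{etv}; the uniqueness of $\pi^*$ (every invariant measure is supported on the unique invariant control set $C$, by the accessibility from Assumption \ref{asp3.1}) has already been recorded. Hence I would only treat $\beta\geq\alpha$ and $\beta<0$. For these, Propositions \ref{prop3.2} and \ref{prop3.3} guarantee that the hitting time $\tau:=\tau^{u,v}_{C^\circ}$ is almost surely finite for every $(u,v)\in\R^2$, and $\big(\xi^{u,v}(\tau),\eta^{u,v}(\tau)\big)\in C^\circ\subset C$. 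The strong Markov property at $\tau$ will be the engine that transfers both the strong law and the total variation convergence off of $C$.

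For the strong law \eqref{slln1}, I would split
\begin{equation*}
\dfrac1t\int_0^tf\big(\xi^{u,v}(s),\eta^{u,v}(s)\big)ds=\dfrac1t\int_0^\tau f\,ds+\dfrac1t\int_\tau^t f\,ds.
\end{equation*}
Because $\tau<\infty$ a.s. and the path $\{(\xi^{u,v}(s),\eta^{u,v}(s)):0\leq s\leq\tau\}$ is a random compact subset of $\R^2$, the quantity $\int_0^\tau f\,ds$ is a finite random variable, so the first term tends to $0$. For the second term, the strong Markov property identifies $(\xi^{u,v}(\tau+\cdot),\eta^{u,v}(\tau+\cdot))$ with a copy of the process started from the point $(\xi^{u,v}(\tau),\eta^{u,v}(\tau))\in C$; applying \eqref{slln} there gives $\frac1{t-\tau}\int_\tau^t f\,ds\to\int_{\R^2}f\,d\pi^*$ a.s., and since $(t-\tau)/t\to1$ the second term converges to $\int_{\R^2}f\,d\pi^*$. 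This yields \eqref{slln1} for every $(u,v)\in\R^2$.

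For the total variation convergence I would test against bounded measurable $g$ with $\|g\|_\infty\leq1$, condition on $\F_\tau$, and split on $\{\tau\leq t\}$ and $\{\tau>t\}$. The contribution of $\{\tau>t\}$ is at most $2\PP(\tau>t)\to0$. On $\{\tau\leq t\}$ the strong Markov property replaces the relevant expectation by $\big(\hat P(t-\tau,(\xi^{u,v}(\tau),\eta^{u,v}(\tau)),\cdot)\big)g$, whose distance to $\pi^*(g)$ is controlled by the total variation quantity in \eqref{etv} evaluated at a point of $C$. Taking the supremum over such $g$ produces the uniform bound
\begin{equation*}
\big\|\hat P(t,(u,v),\cdot)-\pi^*(\cdot)\big\|\leq\E\Big[\1_{\{\tau\leq t\}}\big\|\hat P(t-\tau,(\xi^{u,v}(\tau),\eta^{u,v}(\tau)),\cdot)-\pi^*(\cdot)\big\|\Big]+2\PP(\tau>t).
\end{equation*}

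The main obstacle is passing to the limit in this last expectation, since the elapsed time $t-\tau$ is random while \eqref{etv} supplies only pointwise-in-time convergence from each fixed starting point of $C$. I would settle this by dominated convergence: for almost every $\omega$ the time $\tau(\omega)$ is finite, so as $t\to\infty$ one has $t-\tau(\omega)\to\infty$ with the fixed starting point $(\xi^{u,v}(\tau(\omega)),\eta^{u,v}(\tau(\omega)))\in C$, and \eqref{etv} forces the integrand to $0$; the integrand is bounded by $2$, so the expectation vanishes and the total variation convergence holds for all $(u,v)\in\R^2$ under Assumption \ref{asp3.2}. A minor technical point to dispatch along the way is the a.s. finiteness of $\int_0^\tau|f|\,ds$ for a general $\pi^*$-integrable $f$; this can be handled by first establishing \eqref{slln1} for bounded continuous $f$, where the path-compactness estimate is immediate, and then removing boundedness by a standard truncation together with \eqref{slln}.
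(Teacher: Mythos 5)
Your proposal is correct and takes essentially the same route the paper intends: the paper's own proof of Theorem \ref{thm3.1} is a single sentence citing \eqref{slln}, \eqref{etv}, and Propositions \ref{prop3.2}--\ref{prop3.3}, and your argument (restart at the a.s.\ finite hitting time $\tau^{u,v}_{C^\circ}$, transfer the ergodic statements from $C$ to all of $\R^2$ via the strong Markov property, and pass to the limit by dominated convergence, with $C=\R^2$ handled trivially when $0<\beta<\alpha$) is precisely the standard filling-in of that citation. The only subtlety you add beyond the paper --- finiteness of $\int_0^\tau\abs{f}\,ds$ for general $\pi^*$-integrable $f$, handled by truncation --- is one the paper leaves entirely implicit, so there is no gap relative to the published argument.
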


\begin{proof}
The  assertions can be proved using \eqref{slln}, \eqref{etv}, Propositions \ref{prop3.2}, and \ref{prop3.3}.
\end{proof}

\section{Discussion}\label{sec:dis}
We compare our results with some of the recent results in the literature.
In \cite[Theorem 4.1]{JJ}, under the conditions $\dfrac{c_2}{m_2}<a_2+\dfrac{\beta^2}2$ and $a_1> \alpha^2/2$, it was proved that the predator will eventually die out while the distribution of $x(t)$ converges weakly to the stationary distribution of $u(t)$.
In contrast, using Theorem \ref{thm2.1} of this paper, we obtain the same conclusion provided that
 $a_1> \alpha^2/2$ and $\lambda<0$.
Note that $\lambda<0$ is equivalent to $$\tilde\lambda:=\int_{0}^\infty\dfrac{c_2x}{m_1+m_2x}\mu_-(dx)<a_2+\beta^2/2.$$
It is easy to verify that $\tilde\lambda<\dfrac{c_2}{m_2},$ which
indicates that our result on extinction of predator is sharper.
Furthermore,
a suitable Lyapunov function was used in \cite{JJ} to obtain the ergodicity of system \eqref{me} for the non-degenerate case as follows (see \cite[Theorem 3.1]{JJ}).

\begin{thm}\label{e5.1}
Assume $(c_2-a_2m_2)a_1/b_1>a_2m_1,b_1>a_1m_2/(m_1+m_2x^*)$ and $\alpha>0, \beta>0$ such that $ \delta<\min\{c_2(b_1-m_2(a_1-b_1x^*)/m_1)(m_1+m_3y^*)(x^*)^2,b_2c_1(m_1+m_2x^*)(y^*)^2\}$, where $\delta=c_2x^*\alpha^2/2+c_1y^*\beta^2/2$ and $(x^*,y^*)$ is the equilibrium of the deterministic system
\begin{equation}\label{e}
\begin{cases}
\dot x(t)=x(t)\big(a_1-b_1x(t))-\dfrac{c_1y(t)}{m_1+m_2x(t)+m_3y(t)}\big)dt, \\
\dot y(t)=\big(-a_2-b_2y(t)+\dfrac{c_2x(t)}{m_1+m_2x(t)+m_3y(t)}\big)dt.
\end{cases}
\end{equation}
 Then there is a stationary distribution $\pi(\cdot)$ for system (1.2) and it has ergodic property.
\end{thm}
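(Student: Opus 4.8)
The plan is to prove ergodicity through the classical Khasminskii Lyapunov-function criterion for non-degenerate diffusions, which is the route taken in \cite{JJ}. Because $\alpha>0$, $\beta>0$ and $B_1(\cdot),B_2(\cdot)$ are independent, the diffusion matrix of \eqref{me} is $\mathrm{diag}(\alpha^2x^2,\beta^2y^2)$, and this is uniformly elliptic on every compact subset of $\R^{2,\circ}_+$. Thus the only substantive task is to exhibit a Lyapunov function whose generator is strongly negative off a compact set.

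First I would invoke the conditions $(c_2-a_2m_2)a_1/b_1>a_2m_1$ and $b_1>a_1m_2/(m_1+m_2x^*)$ to guarantee that the deterministic system \eqref{e} possesses a positive interior equilibrium $(x^*,y^*)$, i.e.
$$a_1-b_1x^*-\frac{c_1y^*}{m_1+m_2x^*+m_3y^*}=0,\qquad-a_2-b_2y^*+\frac{c_2x^*}{m_1+m_2x^*+m_3y^*}=0.$$
Centered at this point I would take the Volterra-type function
$$V(x,y)=c_2\Big(x-x^*-x^*\ln\tfrac{x}{x^*}\Big)+c_1\Big(y-y^*-y^*\ln\tfrac{y}{y^*}\Big),$$
which is nonnegative, vanishes only at $(x^*,y^*)$, and blows up both on the boundary of $\R^{2,\circ}_+$ and at infinity. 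Applying It\^o's formula to \eqref{me}, the second-order terms contribute exactly $c_2x^*\alpha^2/2+c_1y^*\beta^2/2=\delta$, so that
$$\mathcal{L}V=c_2(x-x^*)\Big(a_1-b_1x-\frac{c_1y}{m_1+m_2x+m_3y}\Big)+c_1(y-y^*)\Big(-a_2-b_2y+\frac{c_2x}{m_1+m_2x+m_3y}\Big)+\delta.$$

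The decisive step is to subtract the two equilibrium identities and regroup. The quadratic diagonal contributions are $-c_2b_1(x-x^*)^2$ and $-c_1b_2(y-y^*)^2$, while the predation cross terms, which cancel exactly in the classical Lotka--Volterra case, leave a residue because the shared Beddington--DeAngelis denominator varies; bounding this residue is what brings in the coefficient $b_1-m_2(a_1-b_1x^*)/m_1$ together with the lower bounds $m_1+m_2x^*$ and $m_1+m_3y^*$ for the denominators. The hypothesis $\delta<\min\{c_2(b_1-m_2(a_1-b_1x^*)/m_1)(m_1+m_3y^*)(x^*)^2,\ b_2c_1(m_1+m_2x^*)(y^*)^2\}$ is precisely the condition forcing the negative-definite quadratic part to dominate the constant $\delta$, so that $\mathcal{L}V\le-\kappa<0$ outside some neighborhood $U$ of $(x^*,y^*)$. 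With this drift inequality and the uniform ellipticity already noted, Khasminskii's theorem yields a unique stationary distribution $\pi(\cdot)$ and the ergodic property.

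I expect the main obstacle to be exactly that regrouping step: since $\frac{c_iy}{m_1+m_2x+m_3y}$ is not polynomial, after subtracting the equilibrium relations one must split each difference such as $\frac{c_2x}{m_1+m_2x+m_3y}-\frac{c_2x^*}{m_1+m_2x^*+m_3y^*}$ into a part controlled by $(x-x^*)$ and a part controlled by $(y-y^*)$, then compare the discriminant of the resulting quadratic form with $\delta$. It is the restrictiveness of this comparison---requiring the noise level $\delta$ to lie below an explicit expression in the equilibrium coordinates, with no statement about the complementary parameter region---that the threshold $\lambda$ of the present paper is designed to circumvent.
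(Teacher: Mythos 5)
You should first be aware that the paper offers no proof of this statement at all: Theorem~\ref{e5.1} is quoted verbatim from \cite[Theorem 3.1]{JJ} in the Discussion section solely so the authors can argue that its hypotheses are strictly more restrictive than their own condition $\lambda>0$ (they show the parameter set $J$ of Theorem~\ref{e5.1} is a proper subset of $G^+$). So there is no in-paper argument to compare against; the relevant comparison is with the proof in \cite{JJ} itself, and your sketch is essentially that proof: the Volterra function $V(x,y)=c_2\bigl(x-x^*-x^*\ln(x/x^*)\bigr)+c_1\bigl(y-y^*-y^*\ln(y/y^*)\bigr)$, the It\^o correction equal to $\delta$, the subtraction of the equilibrium identities, and Khasminskii's criterion for non-degenerate diffusions. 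Your identification of where the coefficient $b_1-m_2(a_1-b_1x^*)/m_1$ arises is right: writing $D=m_1+m_2x+m_3y$, the cross terms leave a residue $\frac{c_1c_2}{DD^*}\bigl[m_2y^*(x-x^*)^2-m_3x^*(y-y^*)^2+(m_3y^*-m_2x^*)(x-x^*)(y-y^*)\bigr]$, and the positive $(x-x^*)^2$ contribution is absorbed into $-c_2b_1(x-x^*)^2$ using $c_1y^*/D^*=a_1-b_1x^*$ and $D\geq m_1$. One point you only gesture at but which is essential to invoking Khasminskii correctly: the exceptional set $U$ on which $\mathcal{L}V\leq-\kappa$ may fail must be a compact subset of the \emph{open} quadrant $\R^{2,\circ}_+$, i.e.\ bounded away from the coordinate axes, not merely a neighborhood of $(x^*,y^*)$. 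This is exactly why the hypothesis compares $\delta$ against expressions containing the factors $(x^*)^2$ and $(y^*)^2$: if $\mathcal{L}V\leq -A(x-x^*)^2-B(y-y^*)^2+\delta$, then $\delta<\min\{A(x^*)^2,B(y^*)^2\}$ forces the sublevel ellipse $\{A(x-x^*)^2+B(y-y^*)^2\leq\delta\}$ to avoid both axes, so negativity also holds as $x\to0^+$ or $y\to0^+$. With that point made explicit, your sketch is a faithful reconstruction of the cited argument, and your closing remark correctly captures why the present paper replaces this Lyapunov-function condition by the threshold $\lambda$.
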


To show that their assumption is more restrictive than our assumption of ergodicity,
let $G$ be the space of the positive parameters $(a_i, b_i, c_i, m_j, \alpha,\beta)$, $i=1,2$, $j=1, 2, 3,$ $a_1>\alpha^2/2$,
 and $$G^+=\{(a_i, b_i, c_i, m_j, \alpha,\beta): \lambda>0\},\  \ G^-=\{(a_i, b_i, c_i, m_j, \alpha,\beta): \lambda<0\}.$$
It is easy to check that $\lambda$ is a continuous function of parameters. Hence $G^+$ and $G^-$ are open. Moreover, the closure
cl$(G^-)=\{\lambda\leq0\}=(G^+)^c$, which is a necessary condition for the extinction of the predator.
Let $J$ be the set of parameters satisfying the assumption of Theorem \ref{e5.1}, we must have $G^-\cup J=\emptyset$. Since $J$ is open, cl$(G^-)\cup J=\emptyset$ or equivalently $J\subset G^+$.

We will show that $J$ is a proper subset of $G^+$.
Choose $a_1, b_1, c_1, a_2, c_2, m_i, i=1,3, \alpha, \beta$ such that $\lambda>0$. This choice can be done by taking $a_1$ sufficiently large. Now fix these parameters. Since $\lambda$ does not depend on $b_2$, we claim the ergodicity holds for all $b_2>0$.
It can be proved that there exists $M>0$ independent of $b_2$ such that $x^*, y^*<M$, where  $(x^*, y^*)$ is the positive equilibrium of \eqref{e} (if it exists). Thus, for sufficiently small $b_2$ such that
$ \delta>b_2c_1(m_1+m_2x^*)(y^*)^2$, the assumption of Theorem \ref{e5.1} does not hold while $\lambda>0$.

Next we look at the case $m_1=1$, $m_2=1$, $m_3=0$ for which the functional response is said to be Holling type-II (see \eqref{e1.0}).
We will make a comparison with the findings in \cite{LW} in which they proved that if $a_1-\dfrac{\alpha^2}2>0$ and $c_2+a_2-\dfrac{\beta^2}2<0$, the predator will extinct while $x(t)$ converges weakly to the stationary distribution of $\phi(t)$. Moreover, it was shown that the system is persistent in time-average if
$$a_1-\dfrac{\alpha^2}2>0,\  a_2-\dfrac{\beta^2}2>0, \ \hbox{  and } \ \dfrac{a_1-\frac{\alpha^2}2}{c_1}>\dfrac{c_2+a_2-\frac{\beta^2}2}{b_2}.$$
In the same manner as
in the previous part, we can show that our conditions for extinction or permanence and ergodicity are
weaker than those in \cite{LW}.

We have investigated \eqref{me} and \eqref{e3.0} when $\lambda\ne0$.
Note that the set $\{\lambda=0\}$ has  Lebesgue measure zero in the space of parameters $G$.
Although the set $\{\lambda=0\}$ is negligible with respect to the Lebesgue measure,
it is still interesting to explore the asymptotic behavior of the solution in this critical case.
The question of asymptotic behavior corresponding to $\lambda=0$
remains open. To treat this case, new techniques are needed. Moreover,
 it seems that our methods are applicable to stochastic predator-prey models with different types of functional responses as well as different diffusion coefficients. Furthermore, our method can be applied to stochastic models with Markovian switching.

\

{\bf Acknowledgment.}  We  gratefully thank
the reviewer for
constructive comments
and detailed suggestions, which led to
much improvement
of the paper.

\end{document}